\theoremstyle{plain}
\newtheorem{theorem}{Theorem}[section]
\newtheorem{lemma}[theorem]{Lemma}
\newtheorem{corollary}[theorem]{Corollary}
\newtheorem{proposition}[theorem]{Proposition}
\theoremstyle{definition}
\newtheorem{definition}[theorem]{Definition}
\newtheorem{example}[theorem]{Example}
\newtheorem{remark}[theorem]{Remark}
\numberwithin{equation}{section}
\def\hua{\mathcal}
\def\kong{\mathbb}
\def\<{\langle}
\def\>{\rangle}
\def\Ind{\operatorname{Ind}}
\def\Sim{\operatorname{Sim}}
\def\Hom{\operatorname{Hom}}
\def\End{\operatorname{End}}
\def\diff{\operatorname{d}}
\def\Ps{\operatorname{Ps}}
\def\deg{\operatorname{deg}}
\newcommand{\h}{\operatorname{\hua{H}}}            
\renewcommand{\k}{\mathbf{k}}
\renewcommand{\mod}{\operatorname{mod}}
\newcommand{\tilt}[3]{{#1}^{#2}_{#3}}
\def\numbers{\begin{enumerate}[label=\arabic*{$^\circ$}.]}
\def\ends{\end{enumerate}}
\newcommand{\Proj}{\operatorname{Proj}}
\newcommand{\EG}{\operatorname{EG}}       
\newcommand{\EGp}{\operatorname{EG}^\circ}       
\newcommand{\C}[2]
{\operatorname{\hua{C}_{#1}(#2)}}               
\newcommand{\CEG}[2]{\operatorname{CEG}_{#1}(#2)}             
\newcommand{\D}{\operatorname{\hua{D}}}
\newcommand{\qq}[1]{\operatorname{\Gamma}_{#1}Q}
\newcommand{\cc}[1]{\Q{#1}}
\def\zero{\hua{H}_\Gamma}
\def\nzero{\hua{H}_Q}
\newcommand{\aq}[1]{\operatorname{\Gamma}_{#1}A_n}
\newcommand{\wan}[1]{\mathfrak{#1}}
\newcommand{\Q}[1]{\mathcal{Q}(#1)}
\renewcommand{\b}{\mathbf{b}}
\def\BT{\operatorname{BT}}
\def\TP{\operatorname{TP}}
\def\G{\operatorname{G}}
\newcommand{\iv}{\operatorname{iv}}
\title{Ext-quivers of hearts of $A$-type and the orientation of associahedron}
\author{{\small Yu Qiu }}
\date{\today}
\begin{document}

\begin{abstract}
We classify the Ext-quivers of hearts
in the bounded derived category $\D(A_n)$
and the finite-dimensional derived category $\D(\aq{N})$ of
the Calabi-Yau-$N$ Ginzburg algebra $\aq{N}$.
This provides the classification
for Buan-Thomas' colored quiver for higher clusters of A-type.
We also give explicit combinatorial constructions from a binary tree with $n+2$ leaves
to a torsion pair in $\mod \k \overrightarrow{A_n}$ and
a cluster tilting set in the corresponding cluster category,
for the straight oriented $A$-type quiver $\overrightarrow{A_n}$.
As an application,
we show that the orientation of the $n$-dimensional
associahedron induced by poset structure of binary trees
coincides with the orientation induced by poset structure of torsion pairs
in $\mod \k \overrightarrow{A_n}$ (under the correspondence above).

\vskip .3cm
{\parindent =0pt
\it Key words:} Ext-quiver, binary tree, torsion pair, cluster theory
\end{abstract}

\maketitle

\section*{Summary}

Assem and Happel \cite{AH1} gave a classification of iterated tilted algebras of $A$-type
using tilting theory decants ago.
In the first part of the paper (Section~1 and Section~2),
we generalize their result to classify (Theorem~\ref{thm:GGT})
the Ext-quivers of hearts of $A$-type (i.e. in $\D^b(A_n)$),
in terms of graded gentle trees.
As an application,
we describe (Corollary~\ref{cor:GCGC}) the Ext-quivers of hearts in $\D(\aq{N})$,
the finite-dimensional derived category
of the Calabi-Yau-$N$ Ginzburg algebra $\aq{N}$,
which correspond (cf. \cite[Theorem~8.6]{KQ}) to colored quivers for $(N-1)$-clusters of A-type,
in the sense of Buan-Thomas \cite{BT}.

In the second part of the paper (Section~3),
we give explicit combinatorial constructions
(Proposition~\ref{pp:PB=TP} and Proposition~\ref{pp:PB=CEG}),
from a binary trees with $n+2$ leaves (for parenthesizing a word with $n+2$ letters)
to a torsion pair in $\mod \k \overrightarrow{A_n}$ and
a cluster tilting sets in the (normal) cluster category $\C{}{A_n}$,
where $\overrightarrow{A_n}$ is a straight oriented $A_n$ quiver.
Thus, we obtain the bijections between these sets.
As an application,
we show (Theorem~\ref{thm:orientation})
that under the bijection above, the orientation of the $n$-dimensional
associahedron induced by poset structure of binary trees (cf. \cite{L})
coincides with the orientation induced by poset structure of torsion pairs
(or hearts, in the sense of King-Qiu \cite{KQ}).

Note that there are many potential orientations
for the $n$-dimensional associahedron, arising from representation theory of quivers,
cf. \cite[Figure~4 and Theorem~9.6]{KQ}).
These orientations are also interested in physics (see \cite{CCV}),
as they are related to wall crossing formula,
quantum dilogarithm identities and Bridgeland's stability condition
(cf. \cite{K6} and \cite{Q}).

\subsection*{Acknowledgements}
I would like to thank Alastair King for introducing me this topic.

\section{Preliminaries}

\subsection{Derived category and cluster category}
Let $Q$ be a quiver of A-type with $n$ vertices and $\k$ a fixed algebraic closed field.
Let $\k Q$ be the path algebra, $\nzero=\mod\k Q$
its module category and let $\hua{D}(Q)=\hua{D}^b(\nzero)$
be the bounded derived category.
Note that $\D(Q)$ is independent of the orientation of $Q$
and we will write $A_n$ for $Q$ sometimes.

Denote by $\tau$ the \emph{AR-functor} (cf. \cite[Chapter IV]{ASS1}).
Let $\C{}{A_n}$ be the \emph{cluster category} of $\D(A_n)$,
that is the orbit category of $\D(A_n)$ quotient by $[1]\circ\tau$.
Denote by $\pi_n$ be the quotient map
\[\pi_n:\D(A_n)\to\C{}{A_n}.\]

\subsection{Calabi-Yau category}
Denoted by $\qq{N}$ the (degree $N$)
\emph{Ginzburg's differential graded algebra} that associated to $Q$
which the \emph{dg algebra}
\[
    \k \big< e, x, x^*, e^* \mid   e\in Q_0; x\in Q_1 \big>
\]
with degrees
\[
    \deg e =\deg x =0,\quad \deg x^* =N-2,\quad \deg e^* =N-1
\]
and differentials
\[
    \diff \sum_{e\in Q_0} e^*=\sum_{x\in Q_1} \, [x_k,x^*_k] .
\]
Let $\D(\qq{N})$ be the finite dimensional derived category of $\qq{N}$
and $\zero$ be its canonical heart.
Notice that the derived categories are always triangulated.
Again, since $\D(\qq{N})$ is independent of the orientation of $Q$,
we will write $\aq{N}$ for $\qq{N}$.

\subsection{Hearts of triangulated categories}
A \emph{torsion pair} in an abelian category $\hua{C}$ is a pair of
full subcategories $\<\hua{F},\hua{T}\>$ of $\hua{C}$,
such that $\Hom(\hua{T},\hua{F})=0$ and furthermore
every object $E \in \hua{C}$ fits into a short exact sequence
$ \xymatrix@C=0.5cm{0 \ar[r] & E^{\hua{T}} \ar[r] & E \ar[r] & E^{\hua{F}} \ar[r] & 0}$
for some objects $E^{\hua{T}} \in \hua{T}$ and $E^{\hua{F}} \in \hua{F}$.

A \emph{t-structure} on a triangulated category $\hua{D}$ is
a full subcategory $\hua{P} \subset \hua{D}$,
satisfying $\hua{P}[1] \subset \hua{P}$ and being the torsion part
of some torsion pair (with respect to triangles) $\<\hua{P},\hua{P}^{\perp}\>$ in $\D$.
A t-structure $\hua{P}$ is \emph{bounded} if
\[
  \hua{D}= \displaystyle\bigcup_{i,j \in \kong{Z}} \hua{P}^\perp[i] \cap \hua{P}[j].
\]
The \emph{heart} of a t-structure $\hua{P}$ is the full subcategory
\[
  \h=  \hua{P}^\perp[1]\cap\hua{P}
\]
and any bounded t-structure is determined by its heart.
In this paper, we only consider bounded t-structures and their hearts.

Recall that we can forward/backward tilts a heart $\h$ to get a new one,
with respect to any torsion pair in $\h$
in the sense of Happel-Reiten-Smal\o (\cite{HRS}, see also \cite[Proposition~3.2]{KQ}).
Further, all forward/backward tilts with respect to torsion pairs in $\h$,
correspond one-one to all hearts between $\h$ and $\h[\pm1]$
(in the sense of King-Qiu \cite{KQ}).
In particular there is a special kind of tilting which is called
simple tilting (cf.\cite[Definition~3.6]{Q}).
We denote by $\tilt{\h}{\sharp}{S}$ and $\tilt{\h}{\flat}{S}$, respectively,
the simple forward/backward tilts of a heart $\h$, with respect to a simple $S$.

The \emph{exchange graph} of a triangulated category $\D$
to be the oriented graph, whose vertices are all hearts in $\D$
and whose edges correspond to the simple forward titling between them.
Denote by $\EG(A_n)$ the exchange graph of $\D(A_n)$,
and $\EGp(\aq{N})$ the principal component of the exchange graph of $\D(\aq{N})$, that is,
the connected component containing $\zero$.

\section{Ext-quivers of A-type}
\subsection{Graded gentle tree}

In \cite{AH1},
it gives the complete description of all iterated tilted algebra of type $A_n$,
namely:

\begin{definition}\cite{ASS1}
Let $A$ be an quiver algebra with acyclic quiver $T_A$.
The algebra $A \cong \k T_A/\hua{I}$ is called \emph{gentle}
if the bound quiver $(T_A, \hua{I})$ has the following properties:
\numbers
\item
    Each point of $T_A$ is the source and the target of at most two arrows.
\item
    For each arrow $\alpha \in (T_A)_1$,
    there is at most one arrow $\beta$ and one arrow $\gamma$
    such that $\alpha\beta \notin \hua{I}$ and $\gamma\alpha \notin \hua{I}$.
\item
    For each arrow $\alpha \in (T_A)_1$,
    there is at most one arrow $\xi$ and one arrow $\zeta$
    such that $\alpha\xi \in \hua{I}$ and $\zeta\alpha \in \hua{I}$.
\item
    The ideal $\hua{I}$ is generated by the paths in $3^\circ$.
\ends
If $T_A$ is a tree,
the gentle algebra $A \cong \k T_A/\hua{I}$ is called
a \emph{gentle tree algebra}
\end{definition}

\begin{theorem}\label{thm:GT2}
Let $A$ be a quiver algebra with bound quiver $(T_A, \hua{I})$.
Then $A$ is (iterated) tilted algebras of type $A_n$
if and only if
$(T_A, \hua{I})$ is a gentle trees algebra.
{\rm(cf.\cite{AH1}, also \cite{ASS1})}
\end{theorem}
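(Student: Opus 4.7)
\emph{Proof plan.} Both directions will be handled by induction, with the hereditary base case $\k\overrightarrow{A_n}$ as the starting point. For any orientation of $A_n$ the corresponding path algebra is trivially a gentle tree algebra (the ideal is empty, every vertex of a linear tree has degree $\le 2$) and is its own iterated tilt, so both sides of the equivalence hold at the base. The elementary move throughout will be an APR-tilt at a simple projective or a simple injective; for tilted algebras of Dynkin $A_n$ type, every derived equivalence factors into a finite sequence of such elementary tilts, which is what legitimises the induction.

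For the ``only if'' direction, it suffices to prove that the class of gentle tree algebras is closed under APR-tilting. Induction on the length of the tilting sequence starting from $\k\overrightarrow{A_n}$ then yields the conclusion. The verification is local: I would analyse how an APR-tilt at a vertex $v$ modifies $T_A$ and $\hua{I}$ in a small neighbourhood of $v$, and check that each of the four gentleness axioms (1)--(4) and tree-ness survive the flip. Because gentleness forces at most two incoming and two outgoing arrows at $v$, only finitely many local pictures have to be inspected.

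For the ``if'' direction, given a gentle tree algebra $(T_A, \hua{I})$, I would induct on the number of generating zero-relations of $\hua{I}$. The key claim is that whenever $\hua{I}\neq 0$, the restrictive configurations allowed by axioms (2)--(3) force the existence of a sink or a source of $T_A$ at which an APR-tilt strictly decreases the generator count while preserving gentleness. Once $\hua{I}$ has been exhausted, the algebra becomes hereditary $\k Q$ for some orientation $Q$ of the tree $A_n$, and a further chain of APR-tilts reorients it to $\k\overrightarrow{A_n}$.

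The main obstacle is the combinatorial bookkeeping behind the two invoked lemmas: (a) APR-tilting preserves the four axioms and tree-ness, and (b) from any non-trivial ideal one can always find an APR-tilt that strictly reduces the relation count. Step (a) is routine but case-heavy; step (b) is the more delicate one, since in principle an APR-tilt could create new relations in one place while destroying others. To control this I would track the configuration of a relation $\zeta\alpha\in\hua{I}$ relative to the nearest endpoint of the tree, and choose to tilt at that endpoint, where no new relations can appear. With both lemmas in hand the inductive scheme closes, giving the full equivalence.
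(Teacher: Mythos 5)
First, a remark on the ground truth: the paper does not prove this statement at all --- it is quoted from Assem--Happel \cite{AH1} and used as the input for the graded generalization (Theorem~\ref{thm:GGT}). So the relevant comparison is with the strategy the paper uses for that generalization, which is the closest analogue of what you propose, and your proposal does have the same overall shape: start from the hereditary base $\k\overrightarrow{A_n}$, prove closure of the class under an elementary move, and prove reachability in both directions.

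The genuine gap is your choice of elementary move. An APR-tilt at a simple projective (or injective) of a \emph{hereditary} algebra produces the hereditary algebra of the reflected quiver; no relations are ever created. So the induction for the ``only if'' direction, started at $\k\overrightarrow{A_n}$ and iterating APR-tilts, can only ever produce path algebras of reorientations of $A_n$ and never reaches a gentle tree algebra with $\hua{I}\neq 0$ --- yet such algebras (e.g.\ the linear $A_3$ quiver with one zero relation) are genuinely tilted of type $A_3$ and must be accounted for. Correspondingly, your legitimising claim that ``every derived equivalence factors into a finite sequence of such elementary tilts'' is false for APR-tilts: iterated tilted algebras are defined through arbitrary tilting modules, and these do not factor into reflections at simple projectives/injectives. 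The move that makes the scheme work is mutation at an \emph{arbitrary} simple of the heart (equivalently, irreducible mutation of tilting/silting objects in $\D(A_n)$), for which reachability of all hearts from the standard one is the Keller--Vossieck theorem \cite{KV}; this is exactly what the paper runs in the graded setting, with Proposition~\ref{pp:commutative} playing the role of your closure lemma (a) and Lemma~\ref{lem:GGTconn} the role of your connectivity lemma (b). Your ``if'' direction (kill relations by tilting near a leaf of the tree) is a reasonable sketch of the analogue of Lemma~\ref{lem:GGTconn}, but it too must be re-based on simple tilts of hearts rather than APR-tilts, and one must then still identify the endomorphism algebras of the projectives of the intermediate hearts with iterated tilted algebras in the classical sense before the statement as phrased is recovered.
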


Considering the special properties of $T_A$,
we can color it into two colors,
such that any two neighbor arrows $\alpha, \beta$ has the same color
if and only if
$\alpha\beta \in \hua{I}$ or $\beta\alpha \in \hua{I}$.
Alternatively,
we can also color it into two colors,
such that any two neighbor arrows $\alpha, \beta$ has the different color
if and only if
$\alpha\beta \in \hua{I}$ or $\beta\alpha \in \hua{I}$.
By the properties above,
either coloring is unique up to swapping colors.
Hence we have another way to characterize gentle tree algebra as follows.

\begin{definition}
A \emph{gentle tree} is a quiver $T$ with a 2-coloring,
such that each vertex has at most one arrow of each color incoming or outgoing.
\end{definition}

For a colored quiver $T$,
there are two natural ideals
\[
    \hua{I}_T^+:\mbox{generated by all unicolor-paths of length two};
\]
\[
    \hua{I}_T^-:\mbox{generated by all alternating color paths of length two}.
\]

\begin{proposition}
Let $A=\k T/\hua{I}$ be a bound quiver algebra.
We have the following equivalent statement:
\begin{itemize}
\item
    $A$ is a gentle tree algebra.
\item
    $T$ is some gentle tree with $\hua{I}=\hua{I}_T^+$ or $\hua{I}=\hua{I}_T^-$.
\end{itemize}
\end{proposition}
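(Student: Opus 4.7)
The plan is to prove the equivalence by unpacking both definitions and using the coloring construction sketched in the paragraph immediately preceding the proposition. Both directions essentially translate between the ideal-theoretic data (conditions $1^\circ$--$4^\circ$ of the first definition) and the combinatorial data (the 2-coloring with at most one arrow of each color in/out at each vertex).

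For the implication ``gentle tree algebra $\Rightarrow$ gentle tree with $\hua{I}=\hua{I}_T^+$ or $\hua{I}_T^-$'', I would start from a gentle tree algebra $A=\k T_A/\hua{I}$. Condition $1^\circ$ already bounds the valency at each vertex by $2$ (in each direction). Conditions $2^\circ$ and $3^\circ$ together provide a dichotomy: at each vertex, the (at most two) incoming arrows split into one whose composition with a given outgoing arrow lies in $\hua{I}$ and one whose composition does not, and similarly for outgoing arrows. This local dichotomy is exactly the data of a 2-coloring around each vertex. Because $T_A$ is a tree, there are no cycles that could obstruct propagating the local coloring to a global one: pick any arrow, color it, and extend uniquely across the tree, where the choice between $\hua{I}_T^+$ and $\hua{I}_T^-$ is the global swap of the two colors. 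Condition $4^\circ$ guarantees that $\hua{I}$ is generated precisely by those length-two paths flagged by the coloring, so $\hua{I}$ coincides with either $\hua{I}_T^+$ (if we chose ``same color $=$ relation'') or $\hua{I}_T^-$ (the opposite convention).

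For the converse, I would take a gentle tree $T$ with the prescribed 2-coloring and either $\hua{I}=\hua{I}_T^+$ or $\hua{I}=\hua{I}_T^-$, and verify the four conditions of the first definition. Condition $1^\circ$ follows directly from the coloring constraint (at most one of each color gives at most two total). For conditions $2^\circ$--$3^\circ$: given an arrow $\alpha$ of some color $c$, the arrows composable with $\alpha$ at either of its endpoints are at most one of each color, so there is at most one $\beta$ with $\alpha\beta\notin\hua{I}$ and at most one $\xi$ with $\alpha\xi\in\hua{I}$ (and symmetrically on the other side), regardless of whether ``$\in\hua{I}$'' is captured by the unicolor or alternating-color convention. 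Condition $4^\circ$ is tautological from the definition of $\hua{I}_T^\pm$.

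The main obstacle, if any, is bookkeeping rather than mathematical depth: one must check that the local 2-coloring extracted from conditions $2^\circ$--$3^\circ$ is genuinely consistent around every vertex of valency $2+2$ (so that a single color assignment to each arrow globally satisfies the constraint of the gentle-tree definition). This is where the tree hypothesis is essential and where I would be careful to treat the case of a vertex of maximal valency explicitly, confirming that the in-pair and the out-pair can be colored independently and that the resulting assignment records exactly which length-two paths lie in $\hua{I}$. Once this case is handled, the rest is formal.
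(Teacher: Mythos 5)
Your proposal is correct and follows essentially the same route as the paper: the paper's one-line proof simply invokes the coloring dictionary set up in the paragraph before the proposition (neighbouring arrows get the same/different colour according to whether their composite lies in $\hua{I}$), and you spell out exactly that translation in both directions, including the local consistency check at a vertex of full valency and the use of the tree hypothesis to propagate the colouring globally. No gaps; your version is just a more detailed write-up of the paper's argument.
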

\begin{proof}
By the one of two ways of coloring,
the relations in the ideal and the coloring of the gentle tree
can be determined uniquely by each other.
\end{proof}

\begin{remark}
\label{remark}
In fact, there is an irrelevant but interesting result that
for a gentle tree $T$,
$\k T/\hua{I}_T^+$ and $\k T/\hua{I}_T^-$ are Koszul dual.
\end{remark}

We are going to generalize Theorem~\ref{thm:GT2}
to describe all hearts in $\D(A_n)$.

\subsection{Ext-quivers of hearts}

Recall that a heart $\h$ is a finite, if the set of its simples,
denoted by $\Sim\h$,
is finite and generates $\h$ by means of extensions,

\begin{definition}
Let $\h$ be a finite heart in a triangulated category $\D$
and $\mathbf{S}=\bigoplus_{S\in\Sim\h} S$.
The Ext-quiver $\Q{\h}$ is the (positively) graded quiver
whose vertices are the simples of $\h$ and
whose graded edges correspond to a basis of
$\End^\bullet(\mathbf{S}, \mathbf{S})$.
\end{definition}

Note that, by \cite{KQ},
$\h$ is finite, rigid and strongly monochromatic for any $\h$ in $\D(A_n)$.
By \cite[Lemma~3.3]{KQ}, we know that there are at most one arrow
between any two vertices in $\Q{\h}$.

\begin{definition}
A \emph{graded gentle tree} $\hua{G}$
is a gentle tree with a positive grading for each arrow.
The associated quiver $\cc{\hua{G}}$ of $\hua{G}$,
is a graded quiver with the same vertex set and
an arrow $a:i \to j$ for each unicolored path $p:i \to j$ in $\hua{G}$,
with the grading of $p$.
\end{definition}

Define a mutation $\mu$ on graded gentle tree as follow.

\begin{definition}
For a graded gentle tree $\hua{G}$,
let $V$ be a vertex with neighborhood
\[
    \xymatrix@C=1.5pc@R=1.5pc{
    \wan{R}_1 \ar[dr]^{\gamma_1} &&  \wan{B}_2 \ar@{<~}[dl]_{\delta_2} \\
    & V & \\
    \wan{B}_1 \ar@{~}[ur]^{\delta_1} &&  \wan{R}_2 \ar@{<-}[ul]_{\gamma_2} \\
    }
\]
where $\wan{B}_i, \wan{R}_i$ are the sub trees
and $\gamma_i, \delta_i$ are degrees, $i=1,2$.
The straight line represent one color
and the curly line represent the other color.
Define the \emph{forward mutation $\mu_V$} at vertex $V$ (on $\hua{G}$) as follows:
\begin{itemize}
\item
    if $\delta_1 \geq 1$,
    $\mu_V$ on the lower part of of the quiver is:
        \[\xymatrix@C=1.8pc@R=1.8pc{
        & V & \\
        \wan{B}_1 \ar@{~>}[ur]^{\delta_1} &&  \wan{R}_2 \ar@{<-}[ul]_{\gamma_2} \\
        }
        \xymatrix@C=0.8pc@R=0.8pc{&&&\\ \ar@{=>}@/^1pc/[rrr]^{\mu_i} &&&}
        \xymatrix@C=1.8pc@R=1.8pc{
        & V & \\
        \wan{B}_1 \ar@{~>}[ur]^{\delta_1-1} &&  \wan{R}_2 \ar@{<-}[ul]_{\gamma_2+1} \\
        }
    \]
\item
    if $\delta_1 = 1$, denote
    \[
        \xymatrix@C=0.8pc@R=1.5pc{ \\ \wan{B}_1 & \ar@{=}[r] && }
        \xymatrix@C=1.5pc@R=1.5pc{
        \wan{E}_1 \ar[dr]^{\theta_1} &&  \\
        & W & \\
        \wan{L}_1 \ar@{~>}[ur]^{\beta_1} &&  \wan{E}_2 \ar@{<-}[ul]_{\theta_2} \\
        }
    \]
    and $\mu_V$ on the lower part of of the quiver is:
    \begin{gather}
    \label{eq:ggt}
        \xymatrix@C=1.5pc@R=1.5pc{
        \wan{E}_1 \ar[dr]^{\theta_1} &&  V \ar@{<~}[dl]_1 \ar[dr]^{\gamma_2} &\\
        & W && \wan{R}_2 \\
        \wan{L}_1 \ar@{~>}[ur]^{\beta} &&  \wan{E}_2 \ar@{<-}[ul]_{\theta_2} \\}
            \xymatrix@C=0.8pc@R=0.8pc{&&\\ \ar@{=>}@/^.7pc/[rr]^{\mu_i} &&}
        \xymatrix@C=1.5pc@R=1.5pc{
        & V \ar@{<~}[dl]_{\beta} \ar[dr]^1 &&  \wan{E}_2^\times \ar@{<~}[dl]_{\theta_2}\\
        \wan{L}_1 && W & \\
        &  \wan{E}_1^\times \ar@{~>}[ur]^{\theta_1} &&  \wan{R}_2 \ar@{<-}[ul]_{\gamma_2} \\
        }
    \end{gather}
    where $\wan{X}^\times$ is the operation of swapping colors
    on a graded gentle trees $\wan{X}$.
\item
    $\mu_V$ on the upper part follows the mirror rule of the lower part. \\
\end{itemize}
Dually, define the backward mutation $\mu_V^{-1}$ to be the reverse of $\mu_V$
(which follows a similar rule).
\end{definition}

Clearly, the set of all graded gentle trees with $n$ vertexes
is closed under such mutation.
In fact, this set is also connected under (forward/backward) mutation.

\begin{lemma}\label{lem:GGTconn}
Any graded gentle tree with $n$ vertices can be iteratedly mutated from
another graded gentle tree with $n$ vertices.
\end{lemma}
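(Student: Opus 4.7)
The plan is to fix a canonical graded gentle tree $\hua{G}_0$ on $n$ vertices---for instance the linearly oriented $\overrightarrow{A_n}$ with a single color and every arrow of degree $1$---and prove that any graded gentle tree $\hua{G}$ with $n$ vertices can be transformed into $\hua{G}_0$ by a finite sequence of forward/backward mutations. Since mutations are invertible, connectedness of the mutation graph on graded gentle trees with $n$ vertices follows.

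I would proceed by induction on $n$, with the cases $n \leq 2$ handled by direct enumeration. For the inductive step, choose a leaf $V$ of the underlying tree with unique incident arrow $e$ of degree $\delta$. First, as long as $\delta \geq 2$, apply the degree-shift mutation at $V$; each application strictly decreases $\delta$ while transferring one unit of degree onto a neighbouring edge, so after finitely many steps we reach $\delta = 1$. Then a structural mutation at $V$ (the case $\delta_1 = 1$ of \eqref{eq:ggt}) swaps the position of $V$ with its neighbour and color-swaps certain sub-trees; using a sequence of such mutations I would move $V$ to the intended canonical endpoint of $\hua{G}_0$, with its unique edge of the prescribed color and degree.

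At this point, deleting $V$ yields a graded gentle tree $\hua{G}'$ on $n-1$ vertices; by the induction hypothesis $\hua{G}'$ can be mutated to the canonical form on $n-1$ vertices, and one checks that every such mutation lifts to a mutation of $\hua{G}$ that leaves the distinguished leaf $V$ and its edge untouched. Concatenating the three phases gives a sequence from $\hua{G}$ to $\hua{G}_0$.

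The principal obstacle is the color-swapping operation $\wan{X}^\times$ appearing in \eqref{eq:ggt}: it acts non-locally on entire sub-trees, so applying structural mutations near $V$ can recolor parts of the tree already normalised. The technical crux will be pairing the occurrences of $\wan{X}^\times$ so that their net effect vanishes away from $V$, or equivalently choosing the mutation sequence so that the ambient coloring is restored after $V$ is isolated. A secondary bookkeeping task is to verify that the degree-shift routine always leaves the remaining edges with positive degrees, so that the induction hypothesis genuinely applies to $\hua{G}'$.
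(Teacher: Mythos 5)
Your overall shape (induction on $n$, isolating a leaf, reducing to a canonical tree) matches the paper's, but the execution inverts the one idea that makes the induction work, and the difficulty you flag as a ``technical crux'' is a genuine gap rather than a bookkeeping matter. Concretely: after you normalise the leaf edge of $V$ to degree $1$, any subsequent mutation at the vertex adjacent to $V$ that decrements that edge triggers the structural case \eqref{eq:ggt}, which detaches and reattaches $V$ and applies the colour swap $\wan{X}^\times$ to entire subtrees. So your claim that each mutation of $\hua{G}'$ ``lifts to a mutation of $\hua{G}$ that leaves the distinguished leaf $V$ and its edge untouched'' fails precisely in the configuration you have arranged, and you offer no mechanism for pairing off the colour swaps. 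The paper does the opposite: it takes $V$ to be a sink and \emph{backward} mutates at $V$ to make the degree $d$ of its unique incident edge as \emph{large} as one likes. Since each mutation performed inside $\hua{G}'$ changes $d$ by at most one and the structural case only fires at degree $1$, choosing $d$ larger than the length of the mutation sequence supplied by the induction hypothesis guarantees that every mutation of $\hua{G}'$ restricts honestly to $\hua{G}'$, with $V$ and the ambient colouring undisturbed; only at the very end is $V$ normalised by repeated forward mutations. This degree-inflation step is the missing idea in your proposal.

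A secondary point: there is no need to ``move $V$ to the intended canonical endpoint'' by structural mutations at all --- in the paper's argument $V$ stays a pendant sink throughout, which is exactly what keeps the colour-swapping under control. As written, your phase of structural mutations carrying $V$ through the tree is both unspecified and the very source of the unbounded recolouring you worry about, so the proposal does not close.
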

\begin{proof}
Use induction, starting from the trivial case when $n=1$.
Suppose that the lemma follows for $n=m$
and consider the case for $n=m+1$.
We only need to show that any graded gentle tree $\hua{G}$ with $m+1$ vertices
can be iteratedly mutated from an unicolor graded gentle tree with all degrees equal zero.
Let $V$ be a sink in $\hua{G}$ and the subtree of $\hua{G}$ by deleting $V$
is $\hua{G}'$ while the connecting arrow from $\hua{G}'$ to $V$ has degree $d$.
By backward mutating on $V$, we can increase $d$ as large as possible
without changing $\hua{G}'$.
Then the mutation at a vertex other than $V$ on $\hua{G}$
restricted to $\hua{G}'$ will be the same as mutating at that vertex on $\hua{G}'$.
Thus, by the induction assumption, we can mutate $\hua{G}$ such that
$\hua{G}'$ becomes unicolor with all degrees equal zero.
Then, repeatedly forward mutating many times on $V$ will turn $\hua{G}$
into unicolor with all degrees equal zero.
\end{proof}

Using Lemma~\ref{lem:ADC}, a direct calculation gives the following proposition.

\begin{proposition}
\label{pp:commutative}
Let $\hua{G}$ be a graded gentle tree and $\h$ be a heart in $\D(A_n)$.
If $\cc{\hua{G}}=\Q{\h}$
with vertex $V$ in $\hua{G}$ corresponding to the simple $S$ in $\h$,
then
\begin{equation}
\label{eq:mutated}
\Q{\tilt{\h}{\sharp}{S}}=\Q{\mu_V \hua{G}},\quad
\Q{\tilt{\h}{\flat}{S}}=\Q{\mu^{-1}_V \hua{G}}.
\end{equation}
\end{proposition}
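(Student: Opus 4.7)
The plan is to carry out a direct case-by-case analysis of how the simples, their morphism spaces and the gentle relations transform under a simple forward tilt at $S$, and to match this against the two branches of the mutation rule $\mu_V$. The backward case will then follow by duality, replacing $[1]$ with $[-1]$ throughout.

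First I would recall from \cite{KQ} the standard description of the simples of $\tilt{\h}{\sharp}{S}$: the simple $S$ is replaced by its shift $S[1]$, while every other simple $T \in \Sim\h \setminus \{S\}$ is replaced by an object $\psi_S(T)$ defined via a universal triangle built from the morphisms $S \to T[\bullet]$. Concretely, if $\Q{\h}$ has an arrow $S \to T$ of degree $d\geq1$, then $\psi_S(T)$ sits in a triangle $T \to \psi_S(T) \to S[d-1] \to T[1]$; otherwise $\psi_S(T) = T$. This dichotomy corresponds precisely to the dichotomy $\delta_1 \geq 2$ versus $\delta_1 = 1$ in the mutation rule. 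The assumption $\cc{\hua{G}}=\Q{\h}$, together with the fact (noted after the Ext-quiver definition) that in $\D(A_n)$ there is at most one arrow between any two vertices of $\Q{\h}$, means that all the relevant triangles are short and degree-homogeneous.

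Next, using Lemma~\ref{lem:ADC}, I would compute $\End^\bullet(\mathbf{S}',\mathbf{S}')$ for the new simples $\mathbf{S}' = S[1] \oplus \bigoplus \psi_S(T)$ by applying $\Hom^\bullet(-,-)$ to the defining triangles of the $\psi_S(T)$'s and reading off graded dimensions. In the easy branch ($\delta_1\geq 2$), the neighbours of $V$ are unchanged and only the degrees of the two arrows incident to $V$ shift by $\pm 1$, which is exactly the first mutation rule. In the delicate branch ($\delta_1=1$), the degree-$1$ arrow produces a genuine extension: the second neighbour $W$ now acquires new arrows to and from $V$ through the triangle, while one of the old arrows at $V$ is destroyed. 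Unwinding the long exact sequences around $W$ and using the at-most-one-arrow property then yields exactly the $\wan{E}_i \leftrightarrow \wan{E}_i^\times$ reconfiguration displayed in \eqref{eq:ggt}.

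The main obstacle will be verifying the color-swap operation $\wan{X}^\times$: one has to show that the gentle relations around $W$ in the tilted heart have swapped character (unicolor paths become alternating and vice versa). This is not just a graded-dimension count but requires tracking the actual composition of morphisms, and checking that a length-two path $W \to V \to T$ has zero composition in $\tilt{\h}{\sharp}{S}$ precisely when the corresponding two-edge walk in $\mu_V\hua{G}$ is alternating-coloured. I would handle this by exploiting the associated-quiver description $\cc{\hua{G}}$: unicoloured paths in $\hua{G}$ are exactly the arrows of $\cc{\hua{G}}=\Q{\h}$, so the coloring is intrinsically determined by which compositions in $\Q{\h}$ give rise to arrows, and the triangle defining $\psi_S(T)$ controls these compositions by naturality. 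Once this is verified on the local picture around $V$ and $W$, the global identity $\Q{\tilt{\h}{\sharp}{S}}=\Q{\mu_V\hua{G}}$ follows since both $\mu_V$ and simple tilting act trivially on vertices far from $V$.
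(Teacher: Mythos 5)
Your overall strategy is the same as the paper's: the printed proof is a single sentence (``Using Lemma~\ref{lem:ADC}, a direct calculation gives the following proposition''), and your proposal is essentially an expansion of that calculation --- identify the new simples after tilting, compute the graded Hom-spaces via the defining triangles, and match the result against the two branches of $\mu_V$. Your plan for the coloring is also sound in principle: the 2-coloring of $\mu_V\hua{G}$ is recovered from which length-two compositions survive as arrows of the associated quiver, and this is exactly the information that the four cases $A,B,C,D$ of Lemma~\ref{lem:ADC} record.

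However, there is a concrete error in the formula you recall for the simples of $\tilt{\h}{\sharp}{S}$, and since the entire calculation rests on it, the proof would go wrong as written. For a simple \emph{forward} tilt at $S$, a simple $T\neq S$ is modified if and only if $\Ext^1(T,S)\neq 0$, i.e.\ iff there is a degree-one arrow $T\to S$ (an arrow \emph{into} $V$, of degree exactly $1$); in that case $T$ is replaced by the universal extension $R$ fitting in $0\to S\otimes\Ext^1(T,S)^*\to R\to T\to 0$, which is the object $R$ appearing in Lemma~\ref{lem:ADC}. You instead key the modification to an arrow $S\to T$ of arbitrary degree $d\geq 1$ and propose a triangle $T\to\psi_S(T)\to S[d-1]\to T[1]$. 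This is backwards (outgoing arrows from $S$ of any degree leave their targets unchanged and merely have their degree raised by one after $S\mapsto S[1]$), and it is also wrong in the incoming direction for $d\geq 2$ (a degree-$d$ arrow $T\to S$ with $d\geq 2$ leaves $T$ unchanged and lowers the degree to $d-1$; no cone on $S[d-1]$ appears). Consequently the dichotomy you state does not actually line up with the $\delta_1=1$ versus $\delta_1\geq 2$ dichotomy of $\mu_V$: the quantity $\delta_1$ is the degree of an arrow \emph{into} $V$, and the reconfiguration branch \eqref{eq:ggt} is triggered precisely by a genuine $\Ext^1$ into $S$. With the corrected formula the rest of your outline does go through; one further point you should make explicit is that in the $\delta_1=1$ branch the operation $\wan{E}_i\mapsto\wan{E}_i^\times$ recolors an entire subtree, so you need the observation that a global color swap does not change which paths \emph{inside} that subtree are unicolored --- hence $\cc{\mu_V\hua{G}}$ is only altered near $V$ and $W$, which is what justifies your concluding ``acts trivially far from $V$'' step.
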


Now we can describe all Ext-quiver of hearts of A-type.

\begin{theorem}
\label{thm:GGT}
The Ext-quivers of hearts in $\hua{D}(A_n)$
are precisely the associated quivers of graded gentle trees with $n$ vertices.
\end{theorem}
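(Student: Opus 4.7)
The plan is to prove the theorem by a mutation/tilting argument bridged by Proposition~\ref{pp:commutative}, anchored at a convenient base heart. Take $\h_0 = \mod \k \overrightarrow{A_n}$ with the linear orientation $1 \to 2 \to \cdots \to n$; its simples $S_1,\ldots,S_n$ satisfy $\Ext^1(S_i,S_{i+1}) = \k$ (visible on the projective covers) and all other non-diagonal Ext groups vanish, so $\Q{\h_0}$ is the $A_n$ quiver with every arrow placed in degree~$1$. It is then straightforward from the definition of the associated quiver to verify $\Q{\h_0} = \cc{\hua{G}_0}$, where $\hua{G}_0$ is the gentle tree of shape $A_n$ with strictly alternating colors and every arrow labelled~$1$ (so that no unicolored path of length $\geq 2$ exists, matching the vanishing of higher Ext groups); in fact the alternation is forced on $\hua{G}_0$ once one demands that the associated quiver have only degree-$1$ arrows between consecutive vertices.

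For the inclusion that every Ext-quiver $\Q{\h}$ is of the form $\cc{\hua{G}}$, I would use the connectedness of the exchange graph $\EG(A_n)$, which holds in the Dynkin case (cf.~\cite{KQ}): every heart $\h$ is obtained from $\h_0$ by a finite sequence of simple forward/backward tilts. Applying Proposition~\ref{pp:commutative} inductively along this sequence transports the equality $\Q{\h_0}=\cc{\hua{G}_0}$ into $\Q{\h}=\cc{\mu^{\pm 1}_{V_k}\cdots\mu^{\pm 1}_{V_1}\hua{G}_0}$. Since the class of graded gentle trees with $n$ vertices is closed under $\mu_V$ and its inverse (as noted immediately after the definition of $\mu_V$), the right-hand side is indeed the associated quiver of a graded gentle tree.

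For the reverse inclusion, I would combine Lemma~\ref{lem:GGTconn} with the (manifest) reversibility of $\mu_V$ to conclude that the mutation graph on graded gentle trees with $n$ vertices is connected, so any $\hua{G}$ is reachable from $\hua{G}_0$ by a finite sequence of mutations $\mu^{\pm 1}_V$. Performing the corresponding sequence of simple forward/backward tilts starting from $\h_0$ and invoking Proposition~\ref{pp:commutative} at each step, I obtain a heart $\h$ in $\D(A_n)$ with $\Q{\h} = \cc{\hua{G}}$, as required.

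The hard part is essentially bookkeeping. One must match signs correctly, so that a forward mutation $\mu_V$ on the gentle-tree side corresponds to a forward simple tilt on the heart side and Proposition~\ref{pp:commutative} applies with the correct orientation at every step; and one must confirm that Lemma~\ref{lem:GGTconn}, stated as a reduction to the unicolored zero-degree tree, yields the two-sided connectedness needed to pass between our chosen $\hua{G}_0$ and an arbitrary $\hua{G}$ (which follows immediately from reversibility, via the trivial tree as an intermediate). A secondary check is the base-case identification $\Q{\h_0} = \cc{\hua{G}_0}$, a short combinatorial verification from the definition of the associated quiver.
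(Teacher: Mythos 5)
Your proposal is correct and follows essentially the same route as the paper: anchor at the standard heart $\nzero$ with the alternating-colored gentle tree, transport the identification along tilts via Proposition~\ref{pp:commutative}, and use Lemma~\ref{lem:GGTconn} for the converse. The only detail worth making explicit (as the paper does, citing \cite[Theorem~5.7]{KQ}) is that every simple in every heart of $\D(A_n)$ can actually be forward/backward tilted, which is what lets you realize an arbitrary mutation sequence of gentle trees by a corresponding sequence of tilts.
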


\begin{proof}
Note that any heart in $\D(A_n)$ can be iterated tilted from
the standard heart $\nzero$, by \cite{KV}.
Without lose of generality, let $Q$ has straight orientation.
Then $\Q{\nzero}$ certainly is the associated quiver for
the graded gentle tree $\hua{G}_Q$ with the same orientation and alternating colored arrow.
Then, inducting from $\nzero$ and using \eqref{eq:mutated},
we deduce that the Ext-quiver of any heart in $\hua{D}(A_n)$
is the associated quivers of some graded gentle tree with $n$ vertices.
On the other hand,
the set of graded gentle trees with $n$ vertices is connected (Lemma~\ref{lem:GGTconn}).
Then, also by induction, we deduce that
the associated quiver of any graded gentle tree with $n$ vertices
is the Ext-quiver of some heart, because \eqref{eq:mutated} and the fact that
we can forward/backward tilt any simples in any heart in $\D(A_n)$ (\cite[Theorem~5.7]{KQ}).
\end{proof}

Recall that we can CY-N double a graded quiver
in the sense of \cite[Definition~6.2]{KQ}.
Then we have the following corollary.

\begin{corollary}\label{cor:GCGC}
The Ext-quivers of hearts in $\EGp(\aq{N})$ are precisely
the CY-N double of the associated quivers of graded gentle trees
with $n$ vertices.
\end{corollary}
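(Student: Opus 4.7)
The plan is to deduce the corollary from Theorem~\ref{thm:GGT} via the correspondence of \cite[Theorem~8.6]{KQ}, which identifies hearts in the principal component $\EGp(\aq{N})$ with hearts in $\D(A_n)$: the canonical heart $\zero$ maps to $\nzero$, the bijection sends simples to simples, and it commutes with simple forward and backward tilting. Hence it induces a canonical identification of vertex sets of Ext-quivers on the two sides.

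The essential compatibility to establish is that if a heart $\h_\Gamma$ in $\EGp(\aq{N})$ corresponds to $\h$ in $\D(A_n)$, then $\Q{\h_\Gamma}$ is the CY-$N$ double of $\Q{\h}$. I would prove this by induction on the tilting distance from $\zero$. The base case inspects $\Q{\zero}$ directly: for the straight orientation, the graded gentle tree $\hua{G}_Q$ with alternating coloring and all degrees zero satisfies $\cc{\hua{G}_Q}=\Q{\nzero}$, and the generators $x$, $x^*$, $e^*$ of $\aq{N}$ with degrees $0$, $N-2$, $N-1$ exhibit $\Q{\zero}$ as the CY-$N$ double of this quiver. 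For the inductive step, Proposition~\ref{pp:commutative} describes how $\Q{\h}$ transforms by the graded gentle tree mutation $\mu_V$ under simple tilting in $\D(A_n)$, while the parallel tilting in $\EGp(\aq{N})$ transforms $\Q{\h_\Gamma}$ by the CY-$N$ double of $\mu_V$; Serre duality $\Ext^k(\mathbf{S},\mathbf{S})\cong\Ext^{N-k}(\mathbf{S},\mathbf{S})^*$ in the CY-$N$ category forbids any extra arrows that are not already obtained by doubling, and \cite[Lemma~3.3]{KQ} rules out multiple arrows between the same pair of vertices.

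Given this compatibility, the corollary follows from Theorem~\ref{thm:GGT} in both directions: each $\Q{\h_\Gamma}$ is the CY-$N$ double of some $\cc{\hua{G}}$, and, conversely, each $\cc{\hua{G}}$ is realized by some $\h$ in $\D(A_n)$, which lifts along the correspondence to a heart in $\EGp(\aq{N})$ whose Ext-quiver is the CY-$N$ double of $\cc{\hua{G}}$. The main obstacle is the inductive step above, where one must verify that the CY-$N$ double of the (at times intricate) mutation rule in \eqref{eq:ggt} reproduces exactly the effect of simple tilting on the CY-$N$ Ext-quiver; this reduces to a careful tracking of the complementary degrees $\delta$ and $N-1-\delta$ under doubling, and follows from essentially the same calculation as for Proposition~\ref{pp:commutative} reinterpreted in the CY-$N$ setting.
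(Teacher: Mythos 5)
Your overall strategy matches the paper's: identify hearts in $\EGp(\aq{N})$ with hearts in $\D(A_n)$, show that the Ext-quiver upstairs is the CY-$N$ double of the one downstairs, and then invoke Theorem~\ref{thm:GGT} in both directions. The difference lies in how the doubling statement is justified. The paper disposes of it in one line by citing \cite[Corollary~8.3]{KQ} (every heart in the principal component is induced from a heart in $\D(A_n)$) and \cite[Proposition~7.5]{KQ} (the Ext-quiver of an induced heart is the CY-$N$ double of the original); you instead propose to re-derive the doubling by induction on tilting distance from $\zero$, checking the base case $\Q{\zero}$ directly and propagating via the compatibility of simple tilting with graded-gentle-tree mutation (Proposition~\ref{pp:commutative}) together with Serre duality. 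This buys a more self-contained argument, but the inductive step you flag as the main obstacle is exactly the content of the cited \cite[Proposition~7.5]{KQ}: one must verify that simple tilting in the CY-$N$ category transforms the Ext-quiver by the CY-$N$ double of the rule \eqref{eq:ggt}, i.e.\ one needs a CY-$N$ analogue of Lemma~\ref{lem:ADC}, tracking the complementary degrees $\delta$ and $N-\delta$. As written this step is only sketched, so either carry out that finite computation or simply cite \cite[Proposition~7.5]{KQ} as the paper does. One further small point: the correspondence of hearts you want is \cite[Corollary~8.3]{KQ} rather than Theorem~8.6 of that paper, which concerns the identification with Buan--Thomas colored quivers.
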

\begin{proof}
By \cite[Corollary~8.3]{KQ}, any heart $\h$ in $\EGp(\aq{N})$
is induced from some heart $\h'$ in $\D(A_n)$,
while Ext-quiver $\Q{\h}$ is the CY-N double of $\Q{\h'}$
by \cite[Proposition~7.5]{KQ}.
Thus the corollary follows from Theorem~\ref{thm:GGT}.
\end{proof}

By \cite[Proposition~8.6]{KQ},
the augmented graded quivers of colored quivers for $(N-1)$-clusters
(cf. \cite[Definition~6.1]{KQ} and \cite{BT})
of type $A_n$ are also precisely
the CY-N double of the associated quivers of graded gentle trees.

\section{Associahedron}
\subsection{Binary trees}
Let $\BT_m$ be the set of binary trees with $m+1$ leaves
(and hence with $m$ internal vertices),
which can be used to parenthesize a word with $m+1$ letters
(see Figure~\ref{fig:PB} and cf. \cite{L}).
Let $\G_m$ be the full subgraph of the grid $\kong{Z}^2$ inducing by
\[
    \G_m=\{(x,y)\mid x\geq0, y\geq0, x+y\leq m\}\subset\kong{Z}^2.
\]
It is well-known that a binary tree with $m+1$ leaves has a normal form
as a subgraph of $\G_m$, such that the leaves are $\{(x,m-x)\}_{x=0}^m$,
and we will identify the binary tree with such normal form
(see Figure~\ref{fig:PB}).

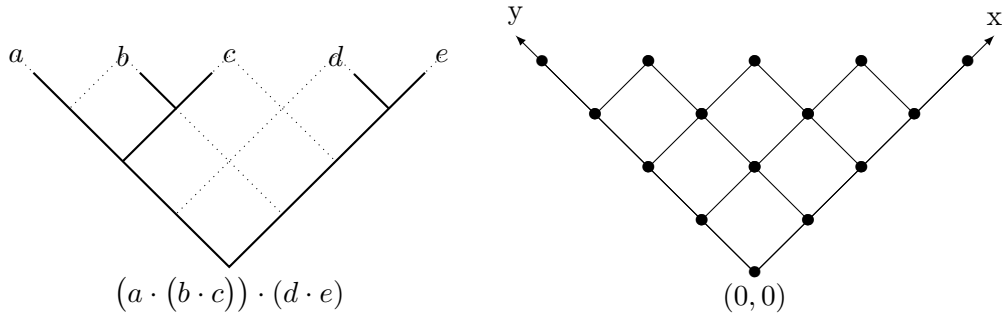
\begin{figure}[h]\centering
\begin{tikzpicture}[scale=.7]
\draw[thick] (-3,3) node (a) {$a$};
\draw[thick] (-1,3) node (b) {$b$};
\draw[thick] (1,3) node (c) {$c$};
\draw[thick] (3,3) node (d) {$d$};
\draw[thick] (5,3) node (e) {$e$};
\draw[thick] (a)  -- (1,-1) -- (e) ;
\draw[thick] (b)  -- (0,2);
\draw[thick] (c)  -- (-1,1);
\draw[thick] (d)  -- (4,2);
\foreach \j in {1,...,4}{
    \foreach \k in {1,...,\j}{
        \draw[dotted] (2*\k-\j-1,\j-1)--(2*\k-\j,\j-2)--(2*\k+1-\j,\j-1);
}}
\draw (1,-1) node[below]
    {$\left( a\cdot \big( b \cdot c \big) \right) \cdot \left( d \cdot e\right)$};
\end{tikzpicture}
\quad
\begin{tikzpicture}[scale=.7]
\foreach \j in {1,...,4}{
    \foreach \k in {1,...,\j}{
        \draw (2*\k-\j,\j)--(2*\k+1-\j,\j-1)--(2*\k+2-\j,\j);
        \draw (2*\k-\j+1,\j-1) node {$\bullet$};
        \draw (2*\k-\j,\j) node {$\bullet$};
        \draw (2*\k+2-\j,\j) node {$\bullet$};
}}
\draw (2,0) node[below]    {$(0,0)$};
\path[->,>=latex] (2,0) edge (-2.5,4.5); \draw (-2.5,4.5) node[above] {y};
\path[->,>=latex] (2,0) edge ( 6.5,4.5); \draw ( 6.5,4.5) node[above] {x};;
\end{tikzpicture}
\caption{A parenthesizing of four words on the left and $\G_4$ on the right.}\label{fig:PB}
\end{figure}
\begin{example}\label{ex:An}
Let
\[
    \G_m^+=\G_m\cap\{(x,y)\mid xy>0\},\quad \G_m^*=\G_m-\{(0,0)\}.
\]
Consider the $A_n$-quiver $\overrightarrow{A_n}: n\to \dots \to 1$ and
let $\h_n=\mod \k \overrightarrow{A_n}$ with corresponding simples $S_1,\ldots,S_n$.
Then, there are canonical bijections (cf. Figure~\ref{fig:G})
\begin{gather*}
    \xi_n: \G_{n+1}^+\to\Ind(\h_n), \\
    \varsigma_n:\G_n^*\to\Ind\h_n\cup \Proj\h_n[1]
\end{gather*}
satisfying $\xi_n(i,j)=\varsigma_n(i-1,j)=M_{i,j}$,
where $M_{i,j}\in\Ind\h_n$ is determined by
\begin{gather}\label{eq:Kgroup}
    [M_{i,j}]=\sum_{i}^{n+1-j} [S_k].
\end{gather}
Let $\zeta_n=\pi_n \circ \varsigma_n:\G_n^*\to\Ind\C{}{A_n}$.
\end{example}

\begin{figure}[t]\centering
\begin{tikzpicture}[scale=.7]
\draw[thick,red] (0,1.5) -- (4, 5.5) -- (-4, 5.5) --cycle;
\draw[thick,blue] (-1,0.5) -- (1, 0.5) -- (5, 4.5) -- (-5,4.5) --cycle;
\foreach \j in {1,...,5}{
    \foreach \k in {1,...,\j}{
        \draw[thin,dashed] (2*\k-\j-2,\j)--(2*\k-\j-1,\j-1)--(2*\k-\j,\j);
        \draw (2*\k-\j-1,\j-1) node {$\bullet$};
        \draw (2*\k-\j-2,\j) node {$\bullet$};
        \draw (2*\k-\j,\j) node {$\bullet$};
}}
\path[->,>=latex] (0,0) edge (-5.5,5.5); \draw (-5.5,5.5) node[above] {y};
\path[->,>=latex] (0,0) edge ( 5.5,5.5); \draw ( 5.5,5.5) node[above] {x};;
\draw (0,5.5) node[above, red] {$\G_5^+ \longleftrightarrow \Ind(\h_4)$};
\draw (4.5,0) node[above, blue] {$\G_4^*\longleftrightarrow\Ind\C{}{A_4}$};
\draw (-3,0) node[above] {$\G_5$};
\end{tikzpicture}
\caption{$\G_5^+$ (red) and $\G_4^*$ (blue) sit inside $\G_5$.}\label{fig:G}
\end{figure}
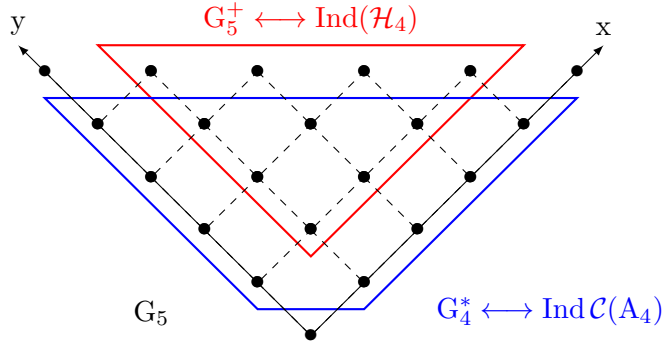

It is known that the following sets
(see \cite{IT} for more possible sets)
can parameterize the vertex set of an $n$-dimensional associahedron:
\numbers
\item
    the set $\BT_{n+1}$ of binary trees with with $n+2$ leaves;
\item
    the set of triangulations of regular $(n+3)$-gon.
\item
    the set $\CEG{}{A_n}$ of (2-)cluster tilting sets in $\C{}{A_n}$;
\item
    the set $\TP(\overrightarrow{A_n})$ of
    torsion pairs in $\h_n$ (cf. \cite{KQ} and \cite{HRS}),
\item
    the set $\EG(\h_n, \h_n[1])$ of hearts in $\D(A_n)$ between $\h_n$ and $\h_n[1]$
    (in the sense of King-Qiu, \cite{KQ}).
\ends
Therefore, there are bijections between these sets.

Furthermore, by \cite[Section~9]{KQ},
the poset structure of torsion pairs (hearts) gives
an orientation $O_t$ of the $n$-dimensional associahedron,
i.e. the orientation of $\EG(\h_n, \h_n[1])$
(considered as a subgraph of $\EG(A_n)$).

On the other hand, there is a poset structure of binary trees,
inducing by locally flipping a binary tree (as shown in Figure~\ref{fig:tree}),
or equivalently, changing the corresponding parenthesizing of words
from $(A\cdot B)\cdot C$ to $A\cdot (B\cdot C)$ (see \cite{L} for details).
This poset structure also gives an orientation $O_p$ for the associahedron.
We aim to prove $O_t=O_p$ this section.

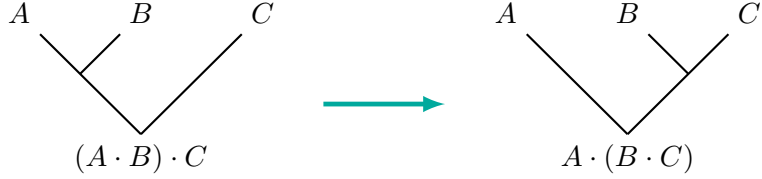
\begin{figure}\centering
\begin{tikzpicture}[scale=.8]
\draw[thick] (-3,3) node (a) {$A$};
\draw[thick] (-1,3) node (b) {$B$};
\draw[thick] (1,3) node (c) {$C$};
\draw[thick] (a)  -- (-1,1);
\draw[thick] (b)  -- (-2,2);
\draw[thick] (c)  -- (-1,1);
\draw (-1,1) node[below] {$(A\cdot B)\cdot C$};
\draw[->,>=latex,line width=.15pc,Emerald] (2,1.5)--(4,1.5);
\draw[thick] (5,3) node (a) {$A$};
\draw[thick] (7,3) node (b) {$B$};
\draw[thick] (9,3) node (c) {$C$};
\draw[thick] (a)  -- (7,1);
\draw[thick] (b)  -- (8,2);
\draw[thick] (c)  -- (7,1);
\draw (7,1) node[below] {$A\cdot (B\cdot C)$};
\end{tikzpicture}
\caption{A local filp of a binary tree (at the word B)}\label{fig:tree}
\end{figure}

\subsection{Combinatorial constructions}
First, we give explicit construction of torsion pairs from binary trees.
For any $p\in\kong{Z}^2$ with coordinate $(x_p, y_p)$,
let $L(p)$ be the edge connecting $(x_p-1, y_p)$ and $p$
and $R(p)$ be the edge connecting $(x_p, y_p-1)$ and $p$.
Define
\begin{gather}
    \hua{T}(\b)=
        \< \xi_n(p) \mid p\in\G_{n+1}^+, L(p)\in\b  \>,\quad
    \hua{F}(\b)=
        \< \xi_n(p) \mid p\in\G_{n+1}^+,R(p)\in\b  \>,
\end{gather}
where $\<\;\>$ means generating by extension.

\begin{proposition}
\label{pp:PB=TP}
There is a bijection $\Theta_n:\BT_{n+1}\to\TP(\overrightarrow{A_n})$,
sending $\b\in\BT_{n+1}$ to $\<\hua{T}(\b), \hua{F}(\b)\>$.
\end{proposition}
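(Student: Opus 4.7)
The plan is to verify in turn that $\<\hua{T}(\b), \hua{F}(\b)\>$ is a bona fide torsion pair in $\h_n$, and that $\Theta_n$ is a bijection. The organizing principle is a combinatorial dictionary between the embedding $\b \subset \G_{n+1}$ and the indecomposables of $\h_n = \mod \k \overrightarrow{A_n}$: each interior lattice point $p \in \G_{n+1}^+$ corresponds to an indecomposable $\xi_n(p) = M_{i,j}$ via \eqref{eq:Kgroup}, and the tree $\b$ assigns to $p$ an ``$L$-type'' or ``$R$-type'' label according to which of $L(p), R(p)$ lies in $\b$.

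First I would establish the key combinatorial fact: for every $p \in \G_{n+1}^+$, exactly one of the two edges $L(p)$ and $R(p)$ belongs to $\b$. This can be proved by induction on $n$, either by peeling off a cherry of $\b$ near the hypotenuse $\{x+y = n+1\}$ or by a direct path-tracing argument using that $\b$, as a tree with $2n+3$ vertices and $2n+2$ edges, is acyclic. As a consequence, the generating sets in the definition of $\hua{T}(\b)$ and $\hua{F}(\b)$ partition $\Ind(\h_n)$ into two disjoint subsets.

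Next I would verify the torsion-pair axioms. The indecomposable $M_{i,j}$ is the interval module on $[i, n+1-j]$; its quotients and submodules correspond to neighbouring points of $p = (i,j)$ reached by sliding along the $L$-axis or $R$-axis of $\G_{n+1}^+$. Using this, closure of the extension-generated class $\hua{T}(\b)$ under quotients (respectively of $\hua{F}(\b)$ under submodules) reduces to showing that the $L$-assignment propagates compatibly along these moves, which in turn follows from acyclicity of $\b$. The vanishing $\Hom(\hua{T}(\b), \hua{F}(\b)) = 0$ is checked similarly: a nonzero map between two interval modules forces a precise relative position of the corresponding lattice points, which is incompatible with one supporting an $L$-edge and the other an $R$-edge in a simply connected subgraph of $\G_{n+1}$. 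The canonical torsion decomposition of any $M \in \h_n$ is then built by iterating this splitting on its composition factors.

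For bijectivity, both $\BT_{n+1}$ and $\TP(\overrightarrow{A_n})$ have cardinality equal to the Catalan number $C_{n+1}$ (cf.~\cite{L} for the former and \cite{KQ} for the latter), so it suffices to prove injectivity. But $\b$ can be reconstructed from the output $\<\hua{T}(\b), \hua{F}(\b)\>$ by reading off, for each $p \in \G_{n+1}^+$, whether $\xi_n(p)$ lies in the torsion or torsion-free part, and then appending the boundary edges of $\b$ (which are forced by the leaf positions on the hypotenuse). The main obstacle is the first step, namely erecting the precise combinatorial dictionary so that the subsequent torsion-pair verifications become local and mechanical; once the dichotomy at each interior point is in hand, the remaining checks reduce to routine applications of the representation theory of $\overrightarrow{A_n}$.
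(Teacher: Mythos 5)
Your argument hinges on the ``key combinatorial fact'' that for every $p\in\G_{n+1}^+$ exactly one of $L(p)$, $R(p)$ lies in $\b$, and on its consequence that the generators of $\hua{T}(\b)$ and $\hua{F}(\b)$ partition $\Ind(\h_n)$. This is false. Both $L(p)$ and $R(p)$ descend from $p$ towards the root $(0,0)$, so a vertex of $\b$ carries at most one of them, and a point of $\G_{n+1}^+$ that is not a vertex of $\b$ carries neither. Concretely, for $n=2$ take the balanced tree $\b=(a\cdot b)\cdot(c\cdot d)\in\BT_{3}$: its edge set is the union of the paths $(0,3)-(0,2)$, $(1,2)-(0,2)$, $(0,2)-(0,1)-(0,0)$, $(2,1)-(2,0)$, $(3,0)-(2,0)$ and $(2,0)-(1,0)-(0,0)$, so the point $(1,1)$ supports neither $L((1,1))$ nor $R((1,1))$. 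Here $\hua{T}(\b)=\add S_1$, $\hua{F}(\b)=\add S_2$, and $\xi_2((1,1))=M_{1,1}=P_2$ lies in neither class. This is not a defect of the statement, since a torsion pair need not contain every indecomposable in $\hua{T}\cup\hua{F}$; but it breaks each subsequent step of your plan: the closure checks, the $\Hom$-vanishing argument, the construction of the canonical sequence by ``iterating this splitting on composition factors'', and the reconstruction of $\b$ by reading off on which side each $\xi_n(p)$ lies.

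What is actually needed, and what the paper does, is to produce the canonical short exact sequence directly for each $M=\xi_n(m)$ with $m\notin\b$: take $t$ (resp.\ $f$) to be the first vertex of $\b$ met from $m$ in the positive $y$- (resp.\ $x$-) direction, check from the tree structure that $L(t),R(f)\in\b$, so $T=\xi_n(t)\in\hua{T}(\b)$ and $F=\xi_n(f)\in\hua{F}(\b)$, verify $[M]=[T]+[F]$ via \eqref{eq:Kgroup}, and invoke Lemma~\ref{lem:TRI} to obtain $0\to T\to M\to F\to 0$; the vanishing $\Hom(\hua{T}(\b),\hua{F}(\b))=0$ is then a separate check via Lemma~\ref{lem:HOM}, using the region of $\G_{n+1}$ cut out by the branch of $\b$ above $f$. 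In the counterexample above this yields $0\to S_1\to P_2\to S_2\to 0$, which your partition-based scheme cannot produce. Your counting argument for bijectivity (both sets have Catalan cardinality, so well-definedness plus injectivity suffice) does match the paper's, but your injectivity argument must also be rerouted, since the torsion pair does not record a label at every point of $\G_{n+1}^+$.
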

\begin{proof}
We only need to show that
$\Theta_n\colon\b\mapsto\<\hua{T}(\b), \hua{F}(\b)\>$
is well-defined (and obviously injective) and hence bijective
since both sets have $n$ elements.

To do so, we first show that any object $M\in\h_n$ admits a short exact sequence
\begin{gather}\label{eq:ses}
    0\to T\to M\to F\to0
\end{gather}
for some $T\in\hua{T}(\b)$ and $F\in\hua{F}(\b)$.
Let $m=\xi_n^{-1}(M)\in\G_{n+1}^+$.
If $m\in\b$ then $M\in\hua{T}(\b)\cup\hua{F}(\b)$ and we have a trivial
short exact sequence\eqref{eq:ses}.
If $m\notin\b$,
let $t$ be the vertex in $\b\cap\{(x_m,j)\mid j\geq y_m\}$ with minimal y-coordinate
and $f$ be the vertex in $\b\cap\{(i,y_m)\mid i\geq x_m\}$ with minimal x-coordinate;
let $a$ and $b$ be the vertices with coordinates
$(n+1-y_t,y_t)$ and $(x_f, n+1-x_f)$,
see Figure~\ref{fig:PB2}.
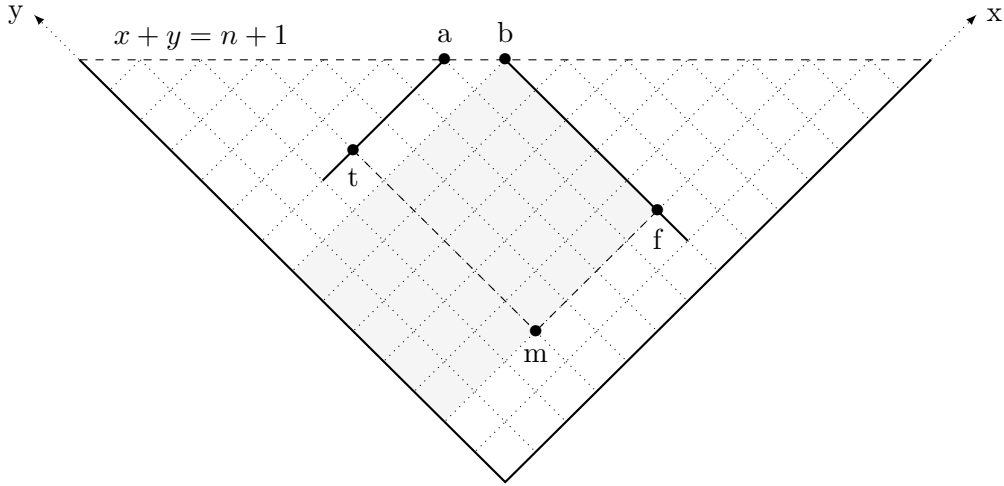
\begin{figure}[b]\centering
\begin{tikzpicture}[scale=.4]
\draw[white,fill=gray!8] (0,14)--(5,9)--(-2,2)--(-7,7)--cycle;
\foreach \j in {1,...,14}{
    \foreach \k in {1,...,\j}{
        \draw[dotted] (2*\k-\j-2,\j)--(2*\k-\j-1,\j-1)--(2*\k-\j,\j);
}}
\path[->,>=latex] (0,0) edge[dotted] (-15.5,15.5);
\draw (-15.5,15.5) node[left] {y};\path[->,>=latex] (0,0) edge[dotted] ( 15.5,15.5);
\draw ( 15.5,15.5) node[right] {x};\draw[thick] (14,14)--(0,0)--(-14,14);
\draw[dashed] (-14,14) -- (14,14);\draw (-10,14) node[above] {$x+y=n+1$};
\draw (1,4.75) node[below] {m};\draw (1,5) node {$\bullet$};
\draw(-5,10.75) node[below] {t};\draw(-5,11) node {$\bullet$};
\draw(-2,14.25) node[above] {a};\draw(-2,14) node {$\bullet$};
\draw(5,8.75) node[below] {f};\draw(5,9) node {$\bullet$};
\draw(0,14.25) node[above] {b};\draw(0,14) node {$\bullet$};
\draw[dashed] (-5,11)--(1,5);\draw[thick] (-5,11)--(-6,10);
\draw[dashed] (5,9)--(1,5);\draw[thick] (5,9)--(6,8);
\draw[thick] (-2,14)--(-5,11);\draw[thick] (0,14)--(5,9);
\end{tikzpicture}
\caption{A short exact sequence in $\G_{n+1}$.}\label{fig:PB2}
\end{figure}
By construction and the property of the binary tree, we know that
\begin{itemize}
\item edges in the line segments, from $m$ to $t$ and from $m$ to $f$, are not in $\b$;
\item edges in the line segments, from $a$ to $t$ and from $b$ to $f$, are in $\b$;
\item
$(x_a,y_a)+(1,-1)=(x_b,y_b)$, i.e. $a,b$ are neighbors in the line $x+y=n+1$;
\item $L(t)$ and $R(f)$ are in $\b$.
\end{itemize}
Thus $T=\xi_n(t)\in\hua{T}(\b)$ and $F=\xi_n(f)\in\hua{F}(\b)$.
By \eqref{eq:Kgroup}, a direct calculation shows that $[M]=[T]+[F]$,
which implies we have \eqref{eq:ses}, by Lemma~\ref{lem:TRI}, as required.

To finish, we need to show that $\Hom(\hua{T}(\b), \hua{F}(\b))=0$.
Let $F=\xi_n(f)\in\hua{F}(\b)$.
As above, edges in the line segments from $b$ to $f$ are in $\b$.
By the property of binary tree,
the horizonal edges (i.e. parallelling to x-axis)
in the shadow area in Figure~\ref{fig:PB2} are not in $\b$,
which implies, by Lemma~\ref{lem:HOM}, that
the modules in $\h_n$ that has nonzero maps to $F$ are not in $\hua{T}(\b)$,
as required.
\end{proof}

Next, we identify cluster tilting sets from binary trees via $\zeta_n$.
For any $\b\in\BT_{n+1}$,
let $\iv(\b)$ be set of the internal vertices expect $(0,0)$
so that $\#\iv(\b)=n$.
Denote by $\Proj\h$ a complete set of indecomposable projectives of a heart $\h$.
Recall (\cite[Section~2]{KQ}) that
\begin{gather}\label{eq:defproj}
     P\in\Proj\h  \iff P\in\Ind(\hua{P}\cap\tau^{-1}\hua{P}^\perp),
\end{gather}
where $\hua{P}$ is the t-structure corresponding to $\h$.

\begin{proposition}
\label{pp:PB=CEG}
Let $\b\in\BT_{n+1}$ and $\h(\b)$ be the heart corresponding to
the torsion pair $\Theta_n(\b)$ in $\h_n$.
Then we have $\Proj\h(\b)=\varsigma_n(\iv(\b))$ and there is a bijection
$\zeta_n\circ\iv: \BT_{n+1}\to\CEG{}{A_n}$.
\end{proposition}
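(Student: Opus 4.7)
The plan is to prove the set equality $\Proj\h(\b)=\varsigma_n(\iv(\b))$ first, and then to deduce the cluster-tilting bijection as a formal consequence via $\pi_n$. The key combinatorial observation I would exploit is that any non-root internal vertex $v=(x_v,y_v)\in\iv(\b)$ is a \emph{branching} point of $\b$, so the two grid edges at $v$ heading towards the hypotenuse $\{x+y=n+1\}$ both lie in $\b$. Concretely: if $y_v\geq1$ then $L(v+(1,0))\in\b$, so $\xi_n(v+(1,0))=M_{x_v+1,y_v}\in\hua{T}(\b)$; and if $x_v\geq1$ then $R(v+(0,1))\in\b$, so $\xi_n(v+(0,1))=M_{x_v,y_v+1}\in\hua{F}(\b)$. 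Since $|\iv(\b)|=n=|\Proj\h(\b)|$, it suffices to produce the inclusion $\varsigma_n(\iv(\b))\subseteq\Proj\h(\b)$.

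To establish the inclusion I would apply the characterization \eqref{eq:defproj} in two cases, according to whether $v$ sits in $\G_n^+$ or on the boundary $\{y_v=0\}$. In the first case, $\varsigma_n(v)=M_{x_v+1,y_v}\in\hua{T}(\b)\subset\h(\b)$, and since $\h_n$ is hereditary, projectivity of this object in $\h(\b)$ reduces to the vanishing $\Ext^1_{\h_n}(M_{x_v+1,y_v},T)=0$ for every $T\in\hua{T}(\b)$. I would verify this by an interval computation in the spirit of the $\Hom$-vanishing step in the proof of Proposition~\ref{pp:PB=TP}: any nonzero extension would produce a grid edge at $v$ incompatible with both $L(v+(1,0))$ and $R(v+(0,1))$ already lying in $\b$. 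In the second case, $\varsigma_n(v)\in\Proj\h_n[1]$, and the condition $R(v+(0,1))\in\b$ places the AR-translate $\tau\varsigma_n(v)$, an indecomposable injective of $\h_n$, inside $\hua{F}(\b)$; via the Auslander--Reiten formula this reduces the projectivity of $\varsigma_n(v)$ in $\h(\b)$ to a $\Hom$-vanishing treated exactly as in Case~1.

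Once $\Proj\h(\b)=\varsigma_n(\iv(\b))$ is in hand, the bijection $\zeta_n\circ\iv\colon\BT_{n+1}\to\CEG{}{A_n}$ falls out formally. By \cite{KQ}, the assignment $\h\mapsto\pi_n(\Proj\h)$ is a bijection between hearts in $\EG(\h_n,\h_n[1])$ and cluster tilting sets in $\C{}{A_n}$, and HRS tilting identifies $\TP(\overrightarrow{A_n})$ with $\EG(\h_n,\h_n[1])$. Composing with $\Theta_n$ from Proposition~\ref{pp:PB=TP} and using
\[
(\zeta_n\circ\iv)(\b)=\pi_n\bigl(\varsigma_n(\iv(\b))\bigr)=\pi_n(\Proj\h(\b))
\]
then yields the desired bijection.

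The step I expect to be the main obstacle is the $\Ext^1$-vanishing in Case~1 (together with its AR-dual $\Hom$-vanishing in Case~2). There one must convert the abstract branching condition at $v$ into a precise refusal of certain interval configurations in $\h_n$: the expected mechanism is that any putative obstructing $T=M_{c,d}\in\hua{T}(\b)$ would, by the definition of $\hua{T}(\b)$, force a grid edge crossing $v$ to be in $\b$, turning $v$ into a pass-through rather than a branching vertex and thus contradicting $v\in\iv(\b)$. Once these vanishings are secured, the remaining assembly is pure bookkeeping.
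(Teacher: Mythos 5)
Your proposal rests on the same key combinatorial fact as the paper's proof --- an internal vertex $v$ of $\b$ is a branching point, so both grid edges at $v$ pointing towards the hypotenuse lie in $\b$ --- and finishes by the same counting argument, so only the inclusion $\varsigma_n(\iv(\b))\subseteq\Proj\h(\b)$ is needed. Where you diverge is in certifying projectivity. The paper simply reads off from the branching condition that $P=\varsigma_n(v)=\xi_n(x_v+1,y_v)$ lies in $\hua{T}(\b)$ (or in $\h_n[1]$ when $y_v=0$), hence in the t-structure $\hua{P}(\b)=\hua{T}(\b)\cup\bigcup_{j>0}\h_n[j]$, and that $\tau P=\xi_n(x_v,y_v+1)$ lies in $\hua{F}(\b)$ (or in $\h_n[-1]$ when $x_v=0$), hence in $\hua{P}(\b)^\perp$; the criterion \eqref{eq:defproj} then concludes immediately. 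You instead reduce to $\Ext^1_{\h_n}(P,T)=0$ for $T\in\hua{T}(\b)$ and propose to establish this by a fresh interval/grid computation, which you flag as the main obstacle and do not carry out. That obstacle is illusory: $\Ext^1(P,T)$ is dual to $\Hom(T,\tau P)$ by Serre duality, and you have already placed $\tau P=M_{x_v,y_v+1}$ in $\hua{F}(\b)$ (your own Case~1 observation, made explicit in your Case~2), so the vanishing follows at once from $\Hom(\hua{T}(\b),\hua{F}(\b))=0$, which Proposition~\ref{pp:PB=TP} already supplies --- no new analysis of which edges cross $v$ is required. In short, your route is correct in outline but leaves its hardest step as a heuristic, when that step is in fact a one-line consequence of what you and the paper have already proved; the paper's use of \eqref{eq:defproj} packages exactly this shortcut.

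Two minor points. Your case split ($v\in\G_n^+$ versus $y_v=0$) silently omits the boundary vertices with $x_v=0$, $y_v>0$, where $\varsigma_n(v)$ is projective in $\h_n$ and $\tau\varsigma_n(v)$ falls outside $\h_n$; this is harmless (the relevant $\Ext^1$ vanishes automatically there), but it should be stated, and the paper's formulation via $q,r\in\G_{n+1}^+$ handles both degenerate boundaries symmetrically. Your derivation of the bijection $\zeta_n\circ\iv\colon\BT_{n+1}\to\CEG{}{A_n}$ from the set equality agrees with the paper's appeal to \cite[Corollary~5.12]{KQ}.
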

\begin{proof}
By \cite[Corollary~5.12]{KQ}, we know that $\pi_n\Proj\h(\b)\in\CEG{}{A_n}$
and hence the second claim follows immediately from the first one.

Let $p\in\iv(\b)$, which is the intersection of the edges $L(r)$ and $R(q)$,
where $q,r$ be the points with coordinates $(x_p,y_p+1)$ and $(x_p+1,y_p)$
(see Figure~\ref{fig:PB3}).
Note that $p$ is not in the line $x_p+y_p\leq n$ and thus $q,r\in\G_{n+1}$.
Let $\hua{P}(\b)$ be the t-structure corresponding to $\h(\b)$.
Note that
\begin{gather*}
    \hua{P}(\b)=\hua{T}(\b)\cup\bigcup_{j>0}\h_n[j],\quad
    \hua{P}(\b)^\perp=\hua{F}(\b)\cup\bigcup_{j<0}\h_n[j].
\end{gather*}
\begin{figure}[b]\centering
\begin{tikzpicture}[scale=.3]
\foreach \j in {1,...,14}{
    \foreach \k in {1,...,\j}{
        \draw[dotted] (2*\k-\j-2,\j)--(2*\k-\j-1,\j-1)--(2*\k-\j,\j);
}}
\path[->,>=latex] (0,0) edge[dotted] (-15.5,15.5);
\draw (-15.5,15.5) node[left] {y};\path[->,>=latex] (0,0) edge[dotted] ( 15.5,15.5);
\draw ( 15.5,15.5) node[right] {x};\draw[thick] (14,14)--(0,0)--(-14,14);
\draw[dashed] (-14,14) -- (14,14);
\draw(2,7.5) node[below] {p};\draw(2,8) node {$\bullet$};
\draw[thick] (-4,14)--(2,8)--(8,14);
\draw(.5,9) node[below,left] {q};\draw(1,9) node {$\bullet$};
\draw(3.5,9) node[below,right] {r};\draw(3,9) node {$\bullet$};
\end{tikzpicture}
\caption{An interval vertex of a binary tree in $\G_{n+1}$.}\label{fig:PB3}
\end{figure}
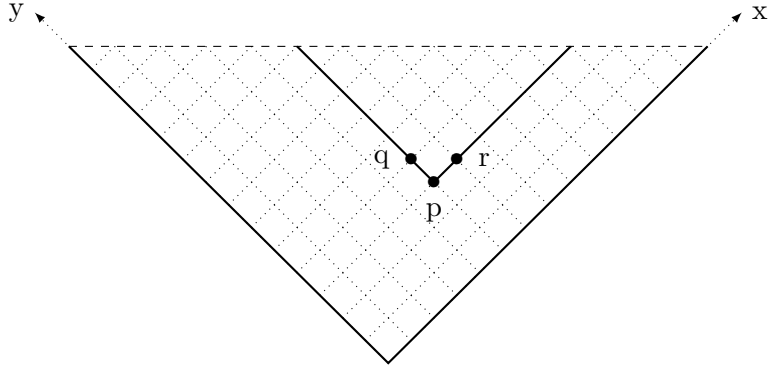

If $r\in\G_{n+1}^+$, then $P=\varsigma_n(p)=\xi_n(r)$ is in $\hua{T}(\b)$;
otherwise, $y_p=0$ and then $P\in\h_n[1]$. Either way, $P\in\hua{P}(\b)$.
Similarly,
if $q\in\G_{n+1}^+$, then $\tau P=\xi_n(q)$ is in $\hua{F}(\b)$;
otherwise, $x_p=0$ and then $\tau P\in\h_n[-1]$. Either way, $\tau P\in\hua{P}(\b)^\perp$.
Therefore $P\in\Proj\h(\b)$ by \eqref{eq:defproj}.
Thus $\Proj\h(\b)$ contains, and hence equals $\varsigma_n(\iv(\b))$ as required,
noticing that $\#\Proj\h(\b)=n=\#\iv(\b)$.
\end{proof}

\begin{example}
Keep the notation in Example~\ref{ex:An}.
Then the binary tree in Figure~\ref{fig:PB} corresponds to the torsion pair
$\hua{T}=\<M_{2,2},M_{1,2}\>, \hua{F}=\<M_{1,3},M_{3,1}\>$
and the cluster tilting set $\{ M_{1,2}, M_{2,2},M_{1,1}[1]  \}$.
\end{example}

\subsection{The orientation}

Now we apply the constructions above to show that $O_t=O_p$.

\begin{theorem}\label{thm:orientation}
Under the bijection $\Theta_n$ in Proposition~\ref{pp:PB=TP},
the orientations $O_t$ and $O_p$ of the $n$-dimensional associahedron coincide.
\end{theorem}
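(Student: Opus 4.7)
The plan is to identify $\Theta_n$ as a poset isomorphism by matching covering relations. Both $O_p$ and $O_t$ are orientations of the $1$-skeleton of the $n$-dimensional associahedron, hence fully determined by their covering relations; so it suffices to show that every local flip $\b\to\b'$ in $\BT_{n+1}$ of the form $(A\cdot B)\cdot C\mapsto A\cdot(B\cdot C)$ corresponds via $\Theta_n$ to a simple forward tilt between the associated hearts $\h(\b)\to\h(\b')$ in $\EG(\h_n,\h_n[1])$.

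First I would analyse the local flip in the grid normal form. Under the identification of $\b$ with its subgraph of $\G_{n+1}$, the flip fixes the root of the affected subtree but swaps exactly one internal vertex: the root $v$ of the subtree $A\cdot B$ (present in $\b$) is replaced by the root $v'$ of $B\cdot C$ (present in $\b'$), while all other internal vertices and all edges outside a small neighbourhood of $\{v,v'\}$ are common to $\b$ and $\b'$. Thus by Proposition~\ref{pp:PB=CEG}, the cluster tilting sets $\zeta_n(\iv(\b))$ and $\zeta_n(\iv(\b'))$ differ by exactly one mutation, exchanging $\zeta_n(v)$ with $\zeta_n(v')$.

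Next I would lift this cluster mutation to a change of heart. Using \cite[Corollary~5.12]{KQ} together with the relation $\Proj\h(\b)=\varsigma_n(\iv(\b))$ of Proposition~\ref{pp:PB=CEG}, the single-element cluster mutation induced by the flip lifts to a simple tilt of $\h(\b)$ at the simple $S\in\Sim\h(\b)$ whose projective cover is $\varsigma_n(v)$; the corresponding simple of $\h(\b')$ is $S[1]$, with projective cover $\varsigma_n(v')$. At the torsion-pair level this is consistent with Proposition~\ref{pp:PB=TP}: a direct comparison of the generators of $\hua{T}(\b),\hua{F}(\b)$ against those of $\hua{T}(\b'),\hua{F}(\b')$ reveals that the single indecomposable whose class switches is precisely the one carried by the moving edge around $\{v,v'\}$.

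The last task is to verify that the tilt is forward rather than backward. This is cleanest via a base check: one confirms the direction on a single reference example (for instance, the right-combed binary tree, whose $\Theta_n$-image is the trivial torsion pair $\<0,\h_n\>$ corresponding to $\h_n$ itself, from which only forward tilts are available since $\h_n$ is the minimum element of $\EG(\h_n,\h_n[1])$) and then propagates the direction across $\BT_{n+1}$ using the connectedness of $\EG(\h_n,\h_n[1])$ under simple tilts (\cite[Theorem~5.7]{KQ}). The principal technical obstacle is the bookkeeping in the second step: tracking precisely which $L$- and $R$-edges of the grid are altered by the flip, and matching them with the single indecomposable transferred between $\hua{T}$ and $\hua{F}$. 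Once this combinatorial identification is secured, the conclusion $O_t=O_p$ follows formally from Propositions~\ref{pp:PB=TP} and~\ref{pp:PB=CEG}.
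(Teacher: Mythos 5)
Your first two steps match the paper's argument: the flip swaps exactly one internal vertex, so by Proposition~\ref{pp:PB=CEG} the sets $\Proj\h(\b)$ and $\Proj\h(\b')$ differ in one object, and \cite[Corollary~5.12]{KQ} upgrades the single cluster mutation to a single simple tilt between the two hearts. The gap is in your final step, where you try to settle whether the tilt is forward or backward by checking one reference vertex and ``propagating'' via connectedness of $\EG(\h_n,\h_n[1])$. Connectedness gives no mechanism for transporting the orientation of one edge to another: an acyclic orientation of the $1$-skeleton of the associahedron is not determined by knowing that a single vertex is a source (already for the pentagon there are several acyclic orientations sharing the same unique source). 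The direction must be decided locally, edge by edge, and your proposal contains no such local criterion.

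The paper's proof supplies exactly this missing local argument. The single mutation produces an exchange triangle $P_j \to M \to P_k \to P_j[1]$ with $\{j,k\}=\{1,2\}$, where $P_i$ is the projective present only in $\Proj\h(\b_i)$; the nonzero connecting map $P_k\to P_j[1]$ together with Lemma~\ref{lem:suc+pre} forces $P_j$ to be a predecessor of $P_k$, and the antisymmetry statement in that lemma means the predecessor relation determines the ordering uniquely. On the other hand, the positions in $\G_{n+1}$ of the two swapped internal vertices of the flip show directly that $P_1$ is a predecessor of $P_2$. Hence $j=1$, $k=2$, and the simple tilt runs from $\h(\b_1)$ to $\h(\b_2)$, i.e.\ it is forward. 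To repair your write-up you should replace the base-check-and-propagate step with this comparison of predecessor relations (or an equivalent local computation at each flipped edge); the rest of your outline then goes through.
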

\begin{proof}
Consider an edge $e:\b_1\to\b_2$ in $\BT_{n+1}$,
which corresponds to a local flip as in Figure~\ref{fig:tree}.
Let $\h(\b_i)$ forward tilt of $\h_n$ with respect to $\Theta_n(\b_i)$.
We only need to show that $\h(\b_2)$ is a simple forward tilt of $\h(\b_1)$.

By Proposition~\ref{pp:PB=CEG},
we know that $\Proj\h(\b_i)=\varsigma_n(\iv(\b_i))$, for $i=1,2$, defer by one object.
Denote by $P_i\in\Proj\h(\b_i)$ the different objects.
Thus, $\pi_n\Proj\h(\b_i)\in\CEG{}{A_n}$ are related by one mutation,
which implies $\h(\b_i)$ are related by a single simple tilting, by \cite[Corollary~5.12]{KQ},
and $P_1,P_2$ are related by some triangle
\[
    P_j \to M \to P_k \to P_j[1]
\]
in $\D(A_n)$ for some ordering $\{j,k\}=\{1,2\}$.
By Lemma~\ref{lem:suc+pre}, $P_j$ is a predecessor of $P_k$.
But, from the flip we know that $P_1$ is the predecessor of $P_2$,
which implies $j=1$ and $k=2$.
Thus the forward simple tiling is from $\h(\b_1)$ to $\h(\b_2)$ as required.
\end{proof}

\begin{example}
Figure~\ref{fig:last} is the orientation of the $2$-dimensional assoiahedron,
induced by poset structure of binary trees, which is the oriented pentagon
in \cite[Figure~3]{KQ} and \cite[(3.5)]{Q}, cf. also \cite[Figure~5]{K6}.

\begin{figure}[h]\centering
\begin{tikzpicture}[scale=.4]

\foreach \j in {1,...,3}{
    \foreach \k in {1,...,\j}{
        \draw[thin,dotted] (2*\k-\j-2,\j)--(2*\k-\j-1,\j-1)--(2*\k-\j,\j);
}}
\draw (0,0) node {$\bullet$};
\draw[thick] (-3,3)  -- (0,0) -- (3,3);
\draw[thick] (-1,3)  -- (0,2);
\draw[thick] (1,3)   -- (-1,1);

\foreach \j in {1,...,3}{
    \foreach \k in {1,...,\j}{
        \draw[thin,dotted] (2*\k-\j-2+10,\j)--(2*\k-\j-1+10,\j-1)--(2*\k-\j+10,\j);
}}
\draw (0+10,0) node {$\bullet$};
\draw[thick] (-3+10,3)  -- (0+10,0) -- (3+10,3);
\draw[thick] (-1+10,3)  -- (1+10,1);
\draw[thick] (1+10,3)   -- (0+10,2);

\foreach \j in {1,...,3}{
    \foreach \k in {1,...,\j}{
        \draw[thin,dotted] (2*\k-\j-2-10*.4,\j+10*.88)--(2*\k-\j-1-10*.4,\j-1+10*.88)--(2*\k-\j-10*.4,\j+10*.88);
}}
\draw (0-10*.4,0+10*.88) node {$\bullet$};
\draw[thick] (-3-10*.4,3+10*.88)  -- (0-10*.4,0+10*.88) -- (3-10*.4,3+10*.88);
\draw[thick] (-1-10*.4,3+10*.88)  -- (-2-10*.4,2+10*.88);
\draw[thick] (1-10*.4,3+10*.88)   -- (-1-10*.4,1+10*.88);

\foreach \j in {1,...,3}{
    \foreach \k in {1,...,\j}{
        \draw[thin,dotted] (2*\k-\j-2+10*1.4,\j+10*.88)--(2*\k-\j-1+10*1.4,\j-1+10*.88)--(2*\k-\j+10*1.4,\j+10*.88);
}}
\draw (0+10*1.4,0+10*.88) node {$\bullet$};
\draw[thick] (-3+10*1.4,3+10*.88)  -- (0+10*1.4,0+10*.88) -- (3+10*1.4,3+10*.88);
\draw[thick] (-1+10*1.4,3+10*.88)  -- (1+10*1.4,1+10*.88);
\draw[thick] (1+10*1.4,3+10*.88)   -- (2+10*1.4,2+10*.88);

\foreach \j in {1,...,3}{
    \foreach \k in {1,...,\j}{
        \draw[thin,dotted] (2*\k-\j-2+5,\j+10*1.2)--(2*\k-\j-1+5,\j-1+10*1.2)--(2*\k-\j+5,\j+10*1.2);
}}
\draw (0+5,0+10*1.2) node {$\bullet$};
\draw[thick] (-3+5,3+10*1.2)  -- (0+5,0+10*1.2) -- (3+5,3+10*1.2);
\draw[thick] (-1+5,3+10*1.2)  -- (-2+5,2+10*1.2);
\draw[thick] (1+5,3+10*1.2)   -- (2+5,2+10*1.2);

\draw[->,>=latex,line width=.15pc,Emerald] (2.5,1.2) -- (7.5,1.2);
\draw[->,>=latex,line width=.15pc,Emerald] (1.5-10*.4,10*.88)--(0,4);
\draw[->,>=latex,line width=.15pc,Emerald] (2.5-10*.4,1.2+10*.88) -- (3,1+10*1.2);
\draw[<-,>=latex,line width=.15pc,Emerald] (10-1.5+10*.4,10*.88)--(10,4);
\draw[<-,>=latex,line width=.15pc,Emerald] (10-2.5+10*.4,1.2+10*.88) -- (10-3,1+10*1.2);

\end{tikzpicture}
\caption{The orientation of the $2$-dimensional assoiahedron}
\label{fig:last}
\end{figure}
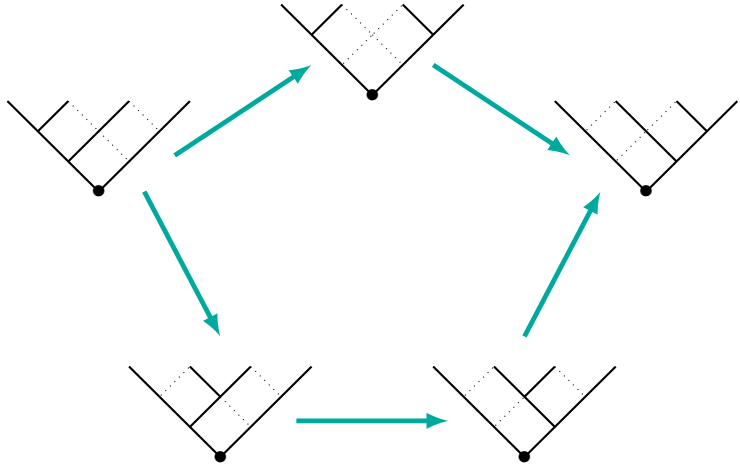
\end{example}

\appendix
\section{Maps and triangles in D(An)}\label{app}
In this appendix, we collet several facts about
the maps and triangles in $\D(A_n)$.
See \cite[Chapter~IX]{ASS1} for the proofs of the first three lemmas.

Recall there are notions of sectional paths and predecessors in $\D(A_n)$
cf. \cite[Section~2.2]{Q}.

\begin{lemma}
\label{lem:TRI}
Let $M,A,B\in \Ind\hua{D}(A_n)$ such that
$A\in\Ps^{-1}(M)$ and $B\in\Ps(M)-\Ps(A)$.
Then there is a short exact sequence $0\to A\to M\to B\to0$ if and only if
$[M]=[A]+[B]$.
\end{lemma}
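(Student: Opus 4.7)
The plan is to treat the two directions separately. The forward direction is immediate: any short exact sequence in a heart of $\D(A_n)$ yields a distinguished triangle, whence $[M]=[A]+[B]$ in the Grothendieck group by additivity of the class on triangles.

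For the backward direction, I would exploit the fact that the AR-quiver of $\D(A_n)$ is isomorphic to $\kong{Z} A_n$, in which indecomposables admit canonical coordinates, the shift $[1]$ and the AR-translate $\tau$ act by explicit translations, and the class $[X]$ of each indecomposable is an explicit (possibly signed) sum of consecutive simples. In these coordinates the sectional cones $\Ps(M)$ and $\Ps^{-1}(M)$ appear as the two triangular regions bounded by the two sectional paths through $M$, so the hypotheses $A\in\Ps^{-1}(M)$ and $B\in\Ps(M)$ have an immediate combinatorial meaning.

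Under these hypotheses the exclusion $B\notin\Ps(A)$ is precisely the statement that $A$ and $B$ lie on the \emph{two different} sectional paths through $M$ (rather than both along the same one), so that $A,M,B$ occupy three corners of a ``rectangle'' in $\kong{Z}A_n$. The K-theoretic identity $[M]=[A]+[B]$ then pins down $A$ and $B$ uniquely, as the pair of indecomposables obtained by cutting $M$'s support interval of simples at a single break point. For this specific configuration the familiar short exact sequence of interval modules
$$0\to M_{i,j}\to M_{i,l}\to M_{j+1,l}\to 0,$$
suitably shifted so that all three objects lie in a single heart, supplies the required triangle.

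The main obstacle is the bookkeeping of shifts: one must verify that the sectional predecessor/successor conditions automatically place the triple $(A,M,B)$ (up to a common shift) inside a single heart of $\D(A_n)$, so that the distinguished triangle is genuinely a short exact sequence in an abelian category; and one must check that the exclusion $B\notin\Ps(A)$ correctly rules out both the degenerate case $A=M$ or $B=M$ and the ``collapsed rectangle'' in which the triangle would split. Once this combinatorial bookkeeping is in place, the explicit classification of indecomposables leaves no room for any other configuration satisfying $[M]=[A]+[B]$, and the conclusion follows.
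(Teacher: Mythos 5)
The first thing to say is that the paper does not actually contain a proof of this lemma: it is one of the three appendix statements for which the author simply points to \cite[Chapter~IX]{ASS1}. So there is no in-paper argument to match your proposal against, and your overall strategy --- the forward direction by additivity of classes over triangles, the backward direction by the explicit combinatorics of $\kong{Z}A_n$ and interval modules --- is the standard route one would take (and essentially what the cited reference does). The forward direction as you state it is complete.

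The backward direction, however, is a plan rather than a proof: the step you yourself call ``the main obstacle'' --- verifying that the sectional hypotheses together with $[M]=[A]+[B]$ actually produce the triangle, with the shifts coming out right --- is precisely the content of the lemma, and it is not carried out. Two further points need repair. First, ``the identity $[M]=[A]+[B]$ pins down $A$ and $B$ uniquely'' is backwards: $A$ and $B$ are given in the hypothesis; what you must show is that, for $A$ and $B$ lying on the two \emph{distinct} sectional rays through $M$ (which is indeed what $A\in\Ps^{-1}(M)$, $B\in\Ps(M)-\Ps(A)$ forces), the identity holds if and only if they form the complementary sub/quotient pair. This is a genuine computation with the classes $[M_{i,j}[k]]=(-1)^k([S_i]+\cdots+[S_j])$, and the signs matter, since $A$ or $B$ may sit in a shifted copy of the heart (e.g.\ in $\D(A_2)$ one has the valid instance $0\to P_2\to S_2\to S_1[1]\to 0$). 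Second, the worry about a ``collapsed rectangle in which the triangle would split'' is misplaced: a triangle whose three terms are indecomposable cannot split, so the only role of $B\notin\Ps(A)$ is to separate the two rays. A cleaner way to close the argument: $A\in\Ps^{-1}(M)$ gives $\Hom(A,M)\neq 0$ by Lemma~\ref{lem:HOM}; the nonzero map is unique up to scalar, its cone is indecomposable and lies on the second sectional ray out of $M$, and the condition $[M]=[A]+[B]$ together with the classification of indecomposables identifies that cone with $B$. Without some such completion, the proposal records the right picture but not a proof.
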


\begin{lemma}
\label{lem:HOM}
Let $M,L \in \Ind\hua{D}(A_n)$.
Then $\Hom(M,L) \neq 0$ if and only if
\begin{gather*}
    L \in \Big[\Ps(M),\Ps^{-1} \big( \tau(M[1]) \big) \Big],\quad
    M \in \Big[ \Ps \big( \tau^{-1}(L[-1]) \big), \Ps^{-1}(L) \Big].
\end{gather*}
\end{lemma}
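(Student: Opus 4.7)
The plan is to reduce the lemma to the classical hammock description of $\Hom$-spaces in the Auslander-Reiten quiver of $\D(A_n)$, which is isomorphic to the translation quiver $\kong{Z}A_n$. First I would set up coordinates so that each $X\in\Ind\D(A_n)$ corresponds to a vertex of $\kong{Z}A_n$; the AR translation $\tau$ becomes a translation of this lattice, the shift $[1]$ becomes a glide reflection, and the two families of irreducible morphisms at each vertex determine the two families of sectional paths whose iteration defines $\Ps$ and $\Ps^{-1}$. In particular, $\Ps(M)$ is the ``upper-left ray'' of $M$ and $\Ps^{-1}(M)$ is the ``lower-right ray'', so that $[\Ps(M),\Ps^{-1}(\tau(M[1]))]$ describes a parallelogram (the hammock of $M$) whose south-west corner is $M$ and whose north-east corner is $\tau(M[1])$.

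For the ``if'' direction, I would show that any $L$ in this parallelogram admits a nonzero morphism $M\to L$ obtained as a composition of irreducible maps along a sectional path from $M$ to $L$. The input here is the classical fact (\cite[Chapter~IX]{ASS1}) that such compositions are nonzero in the mesh category, and this category controls morphisms in $\D(A_n)$ because $\D(A_n)$ is standard.

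For the ``only if'' direction, I would use that every morphism in the mesh category is a $\k$-linear combination of compositions along paths in the AR quiver modulo the mesh relations. Any such path factors through a sectional composition, so if $L$ lies outside the hammock, then every candidate composition $M\to L$ either passes through a mesh (and hence vanishes) or leaves the parallelogram before reaching $L$; this forces $\Hom(M,L)=0$. The second characterization, in which $M$ lies in the dual hammock $[\Ps(\tau^{-1}(L[-1])),\Ps^{-1}(L)]$, is then obtained via the Auslander-Reiten/Serre duality $\Hom(M,L)\cong\Dual\Hom(L,\tau M)$, which interchanges the roles of $M$ and $L$ and sends one hammock region to the other under the twist by $\tau$ and $[1]$.

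The main obstacle I anticipate is handling the boundary cases cleanly: when $M$ is projective in some heart (so $\tau M$ lives in a shifted copy $\h_n[-1]$) or $L$ is injective (dually), one must check that the regions $[\Ps(M),\Ps^{-1}(\tau(M[1]))]$ and its dual agree with the familiar module-category hammocks after accounting for the action of $[1]$. Once the $\kong{Z}A_n$ coordinates are fixed this is mostly book-keeping, but it is the step where a careless argument could easily miss a degenerate endpoint on the boundary of the hammock.
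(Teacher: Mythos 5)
The paper offers no proof of this lemma at all --- it simply points to \cite[Chapter~IX]{ASS1} --- and what you have written is essentially a reconstruction of the standard hammock argument that citation refers to: identify the AR quiver of $\D(A_n)$ with $\kong{Z}A_n$, show via the mesh category that the support of $\Hom(M,-)$ is the parallelogram with corners $M$ and $\tau(M[1])$ bounded by sectional rays, and obtain the dual description of the support of $\Hom(-,L)$ from Serre duality $\Hom(M,L)\cong\Dual\Hom(L,\tau M[1])$. So your route is the intended one; I would only tighten two phrases. In the ``if'' direction, an $L$ in the interior of the hammock is \emph{not} reached from $M$ by a single sectional path, so the nonzero map is a composite of two sectional compositions (first along one ray out of $M$, then along the other direction), and its nonvanishing needs the inductive mesh-category computation rather than just ``sectional compositions are nonzero.'' Likewise, in the ``only if'' direction a single path ``passing through a mesh'' does not vanish on its own --- the mesh relation kills a \emph{sum} of two paths --- so the clean statement is that $\dim\Hom(M,-)$ is determined mesh-by-mesh (it behaves additively on AR triangles away from $M$ and $\tau M[1]$), which yields dimension $1$ on the hammock and $0$ outside, covering your boundary/degenerate cases at the same time.
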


\begin{lemma}\label{lem:suc+pre}
If $\Hom(L,M[1])\neq0$ for some $M$ and $L$ in $\Ind\D(A_n)$,
then $M$ is a predecessor of $L$.
Any two non-isomorphic indecomposables in $\D(A_n)$ can not be
predecessors of each other.
\end{lemma}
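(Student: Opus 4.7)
The plan is to deduce both claims from Lemma~\ref{lem:HOM} together with the well-known identification of $\Ind\D(A_n)$ with the vertices of the translation quiver $\kong{Z}A_n$, on which the suspension $[1]$ and the AR-translate $\tau$ act as commuting integer translations.

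For the first claim, I would start by applying Lemma~\ref{lem:HOM} with the roles of $M$ and $L$ interchanged and $L$ replaced by $M[1]$. This immediately gives
\[
    M[1] \in \Big[\Ps(L),\ \Ps^{-1}\big(\tau(L[1])\big)\Big],
\]
and shifting back by $[-1]$ (using that $\tau$ commutes with $[1]$) yields $M \in \big[\Ps(L)[-1],\ \Ps^{-1}(\tau L)\big]$. What remains is a purely combinatorial verification inside $\kong{Z}A_n$: every indecomposable lying in this ``rectangular'' region is the source of a sectional path terminating at $L$, i.e.\ a predecessor of $L$ in the sense of \cite[Section~2.2]{Q}. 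This is a direct check once one has placed coordinates on $\kong{Z}A_n$ so that $[1]$ and $\tau$ act as standard integer translations; the two boundary rays $\Ps(L)[-1]$ and $\Ps^{-1}(\tau L)$ are precisely the two sectional rays emanating backwards from $L$, and the region they enclose is the full backward cone.

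For the second claim I would exploit the acyclic structure of $\kong{Z}A_n$. Choose a $\kong{Z}$-valued height function $h$ on the vertex set of $\kong{Z}A_n$ with the property that every irreducible morphism strictly increases $h$ (any strictly positive linear combination of the two $\kong{Z}A_n$-coordinates works). Since the predecessor relation is witnessed by a sectional path, i.e.\ a concatenation of irreducible morphisms, ``$M$ is a predecessor of $L$'' forces $h(M)\leq h(L)$, with equality only when the path has length zero, i.e.\ $M\cong L$. Hence the predecessor relation descends to a genuine partial order on isomorphism classes, and two non-isomorphic indecomposables cannot be mutual predecessors.

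The main obstacle is bookkeeping: pinning down the exact definition of ``predecessor'' from \cite[Section~2.2]{Q}, matching $\Ps$ and $\Ps^{-1}$ against the forward and backward sectional-path cones, and verifying that the rectangular region produced by Lemma~\ref{lem:HOM} is exactly the backward cone of $L$ after the shift. Once this dictionary is fixed, both statements reduce to routine combinatorics of $\kong{Z}A_n$ of the type recorded in \cite[Chapter~IX]{ASS1}.
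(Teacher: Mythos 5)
The paper never proves this lemma: it is one of the three appendix statements for which the author simply points to \cite[Chapter~IX]{ASS1}. So there is no in-paper argument to compare against, and your route --- Hom-hammocks via Lemma~\ref{lem:HOM} plus the combinatorics of $\kong{Z}A_n$ --- is the natural reconstruction and essentially what the cited reference supplies. Your treatment of the second claim is correct and complete: a height function (say the sum of the two $\kong{Z}A_n$-coordinates) strictly increases along irreducible morphisms, so the predecessor relation is antisymmetric on non-isomorphic indecomposables.

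For the first claim, however, one intermediate assertion is wrong as stated, though repairable. From Lemma~\ref{lem:HOM} you correctly obtain $M\in\big[\Ps(L[-1]),\Ps^{-1}(\tau L)\big]$, but this region is \emph{not} ``the full backward cone'' of $L$, and its boundary rays are not ``the two sectional rays emanating backwards from $L$'': it is the bounded parallelogram with near corner $L[-1]$ and far corner $\tau L$. More seriously, its elements are in general \emph{not} sources of sectional paths terminating at $L$. Take $M=\tau L$ with $L$ away from the boundary of $\kong{Z}A_n$: Serre duality gives $\Hom(L,\tau L[1])\cong \Hom(\tau L,\tau L)^{*}\neq 0$, yet every path from $\tau L$ to $L$ in the AR quiver is a length-two mesh path, which is precisely a non-sectional one. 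So if ``predecessor'' were defined through sectional paths, the lemma itself would be false, and your verification would break at exactly this step. What you need (and what is true) is the weaker containment: every point of the parallelogram has both $\kong{Z}A_n$-coordinates at most those of $\tau L$, hence strictly less than those of $L$, so it admits an ordinary (generally non-sectional) path of irreducible morphisms to $L$. With ``predecessor'' meaning source of such a path --- the reading forced by how the lemma is applied in Theorem~\ref{thm:orientation} --- your argument closes, and it is also consistent with your height-function proof of antisymmetry, which only uses concatenations of irreducible morphisms, not sectionality.
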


\begin{lemma}\label{lem:ADC}
Let $\h$ be a heart in $\D(A_n)$.
If there are the following full sub-quivers
\[
    \xymatrix@C=1.2pc{
    &S\ar[dr]^a\\
    T \ar[ur]^1 \ar[rr]_{a+1} && A}
\quad
    \xymatrix@C=1.2pc{
    &S\ar[dr]^b\\
    T \ar[ur]^1 && B}
\quad
    \xymatrix@C=1.2pc{
    &S\ar@{<-}[dr]^{c+1}\\
    T \ar[ur]^1 && C\ar[ll]_{c}}
\quad
    \xymatrix@C=1.2pc{
    &S\ar@{<-}[dr]^{d+1}\\
    T \ar[ur]^1 && D}
\]
in the Ext-quiver $\Q{\h}$
for some $S,T, A,B,C,D\in\Sim\h$ and positive integer $a,b,c,d$,
then there are following full sub-quivers
\[
    \xymatrix@C=1pc{
    &S[1]\ar[dr]^{a+1}\\
    R \ar@{<-}[ur]^1 && A}
\quad
    \xymatrix@C=1pc{
    &S[1]\ar[dr]^{b+1}\\
    R \ar@{<-}[ur]^1 \ar[rr]_{b} && B}
\quad
    \xymatrix@C=1pc{
    &S[1]\ar@{<-}[dr]^{c}\\
    R \ar@{<-}[ur]^1 && C}
\quad
    \xymatrix@C=1pc{
    &S[1]\ar@{<-}[dr]^{d}\\
    R \ar@{<-}[ur]^1 && D\ar[ll]^{d+1}}
\]
in the Ext-quiver $\Q{\tilt{\h}{\sharp}{S}}$,
where $R$ is the nontrivial extension of $T$ on top of $S$.
\end{lemma}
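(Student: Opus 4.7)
The proof proceeds by direct Ext computation based on the defining triangle of the new simple $R$ in the tilted heart:
\[
  S \xrightarrow{\alpha} R \xrightarrow{\beta} T \xrightarrow{\gamma} S[1],
\]
where $\gamma \in \Ext^1(T, S) \cong \k$ is a representative of the unique arrow $T \xrightarrow{1} S$ in $\Q{\h}$. For each target simple $X \in \{A, B, C, D\}$, I would apply $\Hom(-, X[\bullet])$ or $\Hom(X, -[\bullet])$ to this triangle and read off the Ext-quiver arrows in $\Q{\tilt{\h}{\sharp}{S}}$.

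The arrows incident to $S[1]$ are a routine matter of shift: $\Ext^k(S[1], X) = \Ext^{k-1}(S, X)$ and $\Ext^k(X, S[1]) = \Ext^{k-1}(X, S)$, which immediately give the arrows $S[1] \to A$ of degree $a+1$, $S[1] \to B$ of degree $b+1$, $C \to S[1]$ of degree $c$ and $D \to S[1]$ of degree $d$; the arrow $S[1] \to R$ of degree $1$ is witnessed by $\alpha$ itself, i.e.\ by $\Hom(S, R) \neq 0$. The substantive content is the computation of $\Ext^\bullet(R, X)$ and $\Ext^\bullet(X, R)$. In Case 1 the long exact sequence induced by $\Hom(-, A[\bullet])$ yields the segment
\[
  \Ext^a(S, A) \xrightarrow{\gamma^*} \Ext^{a+1}(T, A) \to \Ext^{a+1}(R, A) \to \Ext^{a+1}(S, A),
\]
where the connecting map $\gamma^*$ is precomposition with $\gamma$---equivalently, the product in $\End^\bullet(\mathbf{S})$ of the basis classes corresponding to $T \xrightarrow{1} S$ and $S \xrightarrow{a} A$. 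By the hypothesis of Case 1 this product is (a nonzero scalar multiple of) the basis class of the arrow $T \xrightarrow{a+1} A$, so $\gamma^*$ is an isomorphism; combined with the vanishing $\Ext^{a+1}(S, A) = 0$ coming from strong monochromaticity, this forces $\Ext^{a+1}(R, A) = 0$, i.e.\ no arrow $R \to A$. Case 2 runs the same sequence but now the hypothesis ``no $T \to B$ arrow'' gives $\Ext^\bullet(T, B) = 0$, so the relevant segment collapses to $0 \to \Ext^b(R, B) \to \Ext^b(S, B) \to 0$, producing the new arrow $R \xrightarrow{b} B$. Cases 3 and 4 are obtained dually via $\Hom(C, -[\bullet])$ and $\Hom(D, -[\bullet])$, with the composition/no-composition dichotomy on the incoming arrows to $S$ again determining whether the relevant connecting map is an isomorphism (kills $\Ext^\bullet(C, R)$ in Case 3) or zero (creates $D \xrightarrow{d+1} R$ in Case 4).

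The chief obstacle is not the long exact sequences themselves but two supporting checks. First, the connecting map must be identified \emph{exactly} as composition with $\gamma$, which is the standard description of boundary maps in a triangulated long exact sequence. Second, one must suppress all Ext contributions in degrees other than those distinguished by the sub-quiver hypothesis; the decisive input is the strongly monochromatic property of hearts in $\D(A_n)$ from \cite{KQ}, which ensures that each $\Ext^\bullet(X, Y)$ between simples is concentrated in a single degree of dimension at most one. Together with the ``full sub-quiver'' assumption, this annihilates the neighbouring terms of every long exact sequence, so that the isomorphism-versus-zero behaviour of $\gamma^*$ exactly controls whether the target Ext group involving $R$ has dimension zero or one. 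Once this dichotomy is in place for all four configurations, the advertised sub-quivers in $\Q{\tilt{\h}{\sharp}{S}}$ can be read off directly.
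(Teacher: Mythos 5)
Your overall strategy is the same as the paper's: the paper likewise applies $\Hom(-,A)$ to the triangle $S\to R\to T\to S[1]$ and reads off $\Hom^\bullet(R,A)=0$ from the resulting long exact sequence, handling the remaining Hom-spaces and the other three configurations by ``similar direct calculation''. There is, however, one genuine gap at the crux of Cases 1 and 3. You claim that ``by the hypothesis of Case 1'' the product of the basis classes of $T\xrightarrow{1}S$ and $S\xrightarrow{a}A$ is a nonzero multiple of the basis class of $T\xrightarrow{a+1}A$, and hence that the connecting map $\gamma^*\colon\Ext^a(S,A)\to\Ext^{a+1}(T,A)$ is an isomorphism. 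The hypothesis does not give you this: the Ext-quiver records only a basis of $\End^\bullet(\mathbf{S})$, i.e.\ the dimensions of the graded Hom-spaces between simples, and carries no information about the multiplication. A priori the composition $\Hom^1(T,S)\otimes\Hom^a(S,A)\to\Hom^{a+1}(T,A)$ could vanish even though all three spaces are one-dimensional, and if it did, your long exact sequence would yield $\Ext^{a+1}(R,A)\cong\Ext^{a+1}(T,A)\neq 0$, i.e.\ an arrow $R\to A$ of degree $a+1$ --- the opposite of what is to be proved. The non-degeneracy of this composition is a genuine additional fact about $\D(A_n)$, which the paper imports as the isomorphism $\Hom^1(T,S)\otimes\Hom^a(S,A)\xrightarrow{\sim}\Hom^{a+1}(T,A)$ from \cite[Lemma~3.3]{Q}. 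With that input supplied (either by citation or by an independent verification, e.g.\ via the explicit description of morphisms between indecomposables of $\D(A_n)$ as in Lemma~\ref{lem:HOM}), your argument closes; the rest --- the shift bookkeeping for the arrows at $S[1]$, the collapse of the sequence in Cases 2 and 4 where $\Ext^\bullet(T,B)=0$, and the use of strong monochromaticity to kill the neighbouring terms --- is correct and matches the paper's proof.
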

\begin{proof}
We only prove the first case while the other cases are similar.
By \cite[Theorem~5,7]{KQ}, we know that the simples in $\tilt{\h}{\sharp}{S}$
corresponding to $S, T$ and $A$ are $S[1], R$ and $A$.
By \cite[Lemma~3.3]{Q},
we have an isomorphism $\Hom^1(T,S)\otimes\Hom^a(S,A)\to\Hom^{a+1}(T,A)$.
Thus, applying $\Hom(-,A)$ to the triangle $S \to R \to T \to S[1]$
gives $\Hom^\bullet(R,A)=0$.
Similarly, a direct calculation of other $\Hom^\bullet$ between $S[1],R, A$
shows the the new sub-quiver is as required.
\end{proof}


\end{document}